\newtheorem{de}{Definition}[section]
\newtheorem{lem}[de]{Lemma}
\newtheorem{prop}[de]{Proposition}
\newtheorem{cor}[de]{Corollary}
\newtheorem{thm}[de]{Theorem}
\newtheorem{thmintro}{Theorem}
\theoremstyle{remark}
\newtheorem{rem}[de]{Remark}
\newtheorem{ex}[de]{Example}
\newcommand{\dfn}[1]{\textbf{\boldmath{#1}}}
\DeclareMathOperator{\stab}{stab}
\newcommand{\ra}{\to}
\newcommand{\N}{\ensuremath{\mathbb{N}}}
\newcommand{\Z}{\ensuremath{\mathbb{Z}}}
\newcommand{\R}{\ensuremath{\mathbb{R}}}
\newcommand{\cC}{\mathcal{C}}
\newcommand{\cG}{\mathcal{G}}
\newcommand{\DS}{{\mathcal{DS}}}
\newcommand{\ddt}{{\textstyle\frac{d}{dt}}}
\newcommand{\id}{\mathrm{id}}
\newcommand{\pr}{\mathrm{pr}}
\newcommand{\io}{\iota}
\newcommand{\Diff}{\mathrm{Diff}}
\newcommand{\Vect}{\mathrm{Vect}}
\newcommand{\Set}{\mathrm{Set}}
\ifpdf  \usepackage[pdftex,bookmarks=false]{hyperref}
\else   \usepackage[hypertex]{hyperref}
\definecolor{darkgreen}{rgb}{0,0.45,0} 
\title{Tangent spaces of bundles and of\break filtered diffeological spaces}
\author{J. Daniel Christensen}
\email{jdc@uwo.ca}
\address{Department of Mathematics,
         University of Western Ontario,
         London, Ontario,
         Canada}
\author{Enxin Wu}
\email{enxin.wu@univie.ac.at}
\address{DIANA Group,
         Faculty of Mathematics,
         University of Vienna,
         Oskar-Morgenstern-Platz 1,
         1090 Vienna,
         Austria}
\date{June 30, 2016}
\begin{document}

\subjclass[2010]{57P99, 58A05.}

\keywords{Filtered diffeological space, tangent space, tangent bundle, diffeological bundle}

\begin{abstract}
We show that a diffeological bundle gives rise to an exact sequence of internal tangent spaces.
We then introduce two new classes of diffeological spaces, which we call \dfn{weakly filtered}
and \dfn{filtered} diffeological spaces, whose tangent spaces are easier to understand.
These are the diffeological spaces whose categories of pointed plots are (weakly) filtered.
We extend the exact sequence one step further in the case of a diffeological bundle with
filtered total space and base space.
We also show that the tangent bundle $T^H X$ defined by Hector is a diffeological vector space
over $X$ when $X$ is filtered or when $X$ is a homogeneous space,
and therefore agrees with the dvs tangent bundle introduced by the authors in a previous paper.
\end{abstract}

\setcounter{tocdepth}{1}
\maketitle

\vspace*{-19pt}

\tableofcontents

\vspace*{-20pt}

\section{Introduction}

Diffeological spaces are elegant generalizations of smooth manifolds which include
many more examples, such as orbifolds, singular spaces and function spaces.
This makes them convenient for many applications, ranging from mathematical physics
to homological algebra~\cite{Wu} and homotopy theory~\cite{hDiff}.
In~\cite{CW}, we studied tangent spaces of diffeological spaces, focusing
on the internal tangent space $T_x(X)$.
Our results include many computations and general properties.
The union $T^H X$ of these tangent spaces was given the structure of a diffeological
space by Hector~\cite{He}, but it turns out that in general this diffeology does
not have the desired properties.  For example, the addition operation 
$T^H X \times_X T^H X \to T^H X$ is not smooth in general.
In~\cite{CW}, we enlarged the diffeology in a way that corrects this problem,
and denoted the result $T^{dvs} X$.
We gave examples where $T^H X$ is different from $T^{dvs} X$, and examples where
they agree, but the precise conditions under which they agree are not known.

In this paper, we have three main results.
For the first, we recall some terminology.
A \dfn{diffeological bundle with fiber $F$} is a smooth map $\pi : E \to B$ such
that for any smooth map $f : \R^n \to B$, the pullback of $\pi$ along $f$
is a trivial bundle with fiber $F$.

The first result is:

\begin{thmintro}[Theorem~\ref{th:bundle-exact2}]\label{th:1}
Suppose $\pi : E \to B$ is a diffeological bundle.
Let $e \in E$ and $b = \pi(e)$, and
write $\io : F \to E$ for the inclusion of the fiber over $b$.
Then
\[
\xymatrix{T_e(F) \ar[r]^{\io_*} & T_e(E) \ar[r]^-{\pi_*} & T_b(B) \ar[r] & 0}
\]
is an exact sequence of vector spaces.  Moreover, both
$\pi_*: T_e^H(E) \ra T_b^H(B)$ and $\pi_*: T_e^{dvs}(E) \ra T_b^{dvs}(B)$ are subductions.
\end{thmintro}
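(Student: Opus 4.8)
The plan is to prove the three exactness assertions with one linear-algebra device coming from the diagonal map, and then to upgrade surjectivity of $\pi_*$ to the two subduction statements by lifting plots. The device is this: since the internal tangent functor lands in vector spaces and agrees with the ordinary tangent functor on open subsets of Euclidean space, any smooth $M\colon U\times U'\to E$ with $M(0,0)=e$ satisfies $M_*(w,w')=(M(\cdot,0))_*w+(M(0,\cdot))_*w'$ for $w\in T_0(U)$ and $w'\in T_0(U')$, because $T_0(U\times U')=T_0(U)\oplus T_0(U')$ and $M_*$ is linear. Exactness at $T_b(B)$ follows from the bundle hypothesis: $T_b(B)$ is spanned by classes $[q,w]$ with $q\colon U\to B$ a pointed plot at $b$ and $w\in T_0(U)$, and since the pullback of $\pi$ along $q$ is trivial it has a section whose value at $0$ is carried to $e$ by the canonical map to $E$; this composite is a lift $\tilde q\colon U\to E$ with $\pi\tilde q=q$ and $\tilde q(0)=e$, so $\pi_*[\tilde q,w]=[q,w]$ and by linearity $\pi_*$ is onto. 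Finally $\mathrm{im}\,\io_*\subseteq\ker\pi_*$ because $\pi\io$ is constant, so $\pi_*\io_*=0$.

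The substance is the reverse inclusion $\ker\pi_*\subseteq\mathrm{im}\,\io_*$, and the first step is a decomposition of the generators. Given $v=[p,w]$ with $p(0)=e$, put $q=\pi p$; since $q^*\pi$ is trivial I may pick a trivialization, that is, a smooth $\theta\colon U\times F\to E$ with $\pi\theta(u,f)=q(u)$, each $\theta(u,-)$ a diffeomorphism onto $\pi^{-1}(q(u))$, and $\theta(0,-)=\id_F$ under $F=\pi^{-1}(b)$. Writing $p(u)=\theta(u,\sigma(u))$ for a smooth $\sigma\colon U\to F$ with $\sigma(0)=e$ and applying the device to $G(u,u')=\theta(u,\sigma(u'))$ gives $v=(p_H)_*w+\io_*(\sigma_*w)$, where $p_H(u)=\theta(u,e)$ is the horizontal lift of $q$ determined by $\theta$. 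Thus, modulo $\mathrm{im}\,\io_*$, each generator equals its horizontal part $(p_H)_*w$, and everything reduces to controlling the class of $(p_H)_*w$.

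Here is the main obstacle. Because the pointed-plot category of $B$ need not be filtered, the vanishing $[q,w]=0$ in the colimit $T_b(B)$ is an uncontrolled zigzag of identifications, which I cannot hope to realize by a single factorization and then lift. I will sidestep this by constructing a linear left inverse $\beta\colon T_b(B)\to T_e(E)/\mathrm{im}\,\io_*$ directly, setting $\beta[q,w]=[(p_H)_*w]$ and verifying that it descends from the colimit. Independence of the trivialization is a second application of the device: two horizontal lifts of the same $q$ differ by $p_H'(u)=\theta(u,\rho(u))$ with $\rho(0)=e$, so $(p_H')_*w=(p_H)_*w+\io_*(\rho_*w)$. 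Compatibility with a morphism $g\colon(q,U)\to(q',U')$ comes from pulling back the trivialization by $g$, which yields $p_H=p_H'\circ g$ and hence $(p_H)_*w=(p_H')_*(g_*w)$ on the nose. The universal property then delivers a well-defined linear $\beta$, and the decomposition above shows that $\beta\circ\pi_*$ and the quotient map $T_e(E)\to T_e(E)/\mathrm{im}\,\io_*$ agree on the classes $[p,w]$, hence everywhere. Consequently $v\in\ker\pi_*$ implies $v\in\mathrm{im}\,\io_*$, proving exactness.

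For the two subduction claims, $\pi_*$ is smooth and, by the above, surjective on each fibre, so only the lifting of plots remains, and again it is enough to lift the underlying plots of $B$. A plot of $T_b^H(B)$ is locally $v\mapsto[q,\Gamma(v)]$ for a pointed plot $q\colon U\to B$ at $b$ and smooth $\Gamma\colon V_0\to T_0(U)$; lifting $q$ to $\tilde q$ through $e$ as before, $v\mapsto[\tilde q,\Gamma(v)]$ is a plot of $T_e^H(E)$ over it, so $\pi_*$ is a subduction for the Hector diffeology. For the $dvs$ diffeology a plot is locally a fibrewise linear combination $\sum_i c_i(v)\,[q_i,\Gamma_i(v)]$ with smooth coefficients $c_i$; lifting each $q_i$ through $e$ and forming $\sum_i c_i(v)\,[\tilde q_i,\Gamma_i(v)]$ produces a lift because $\pi_*$ is fibrewise linear. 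The only delicate point is to match these lifted families with the precise generating plots of the two diffeologies; granting those descriptions from the authors' earlier work, the bundle lifting of underlying plots finishes the proof.
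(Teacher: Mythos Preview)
Your exactness argument is correct and takes a genuinely different route from the paper's. The paper proves $\ker\pi_*\subseteq\mathrm{im}\,\io_*$ by first establishing a combinatorial lemma characterizing zero vectors in an internal tangent space: it expresses the relation $\sum_i(\pi s_i)_*(u_i)=0$ in $T_b(B)$ as a finite configuration of ``basic relations'' $(r,v)-(q,g_*(v))$, and then lifts each piece through a trivialization to assemble an explicit preimage in $T_e(F)$. You instead construct a linear map $\beta\colon T_b(B)\to T_e(E)/\mathrm{im}\,\io_*$ via horizontal lifts and verify directly that $q\mapsto\big(w\mapsto[(p_H)_*w]\big)$ is a cocone for the colimit defining $T_b(B)$; the identity $\beta\circ\pi_*=(\text{quotient})$ then yields the inclusion at once. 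Your approach is more conceptual and bypasses the bookkeeping of the lemma entirely, at the cost of not exhibiting an explicit $w\in T_e(F)$ with $\io_*w=v$; for the statement at hand that is immaterial.

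Your subduction argument, however, has a gap. A plot $p\colon U\to T_b^H(B)$ is, by definition of Hector's diffeology, locally of the form $Tq\circ(\alpha,\beta)$ with $q\colon V\to B$ a plot and $q\circ\alpha$ constant at $b$; but $\alpha\colon U\to V$ itself need not be constant. Your description ``$v\mapsto[q,\Gamma(v)]$'' tacitly fixes the foot point in $V$ at $0$. With a non-constant $\alpha$ your lift no longer lands in the right fibre: if you lift $q$ to $\tilde q$ with $\tilde q(0)=e$ and form $T\tilde q\circ(\alpha,\beta)$, the foot point in $E$ is $\tilde q(\alpha(u))$, which need not equal $e$. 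The paper repairs this using the trivialization $h\colon V\times F\to E$: one finds a smooth $\gamma\colon U\to F$ with $h(\alpha(u),\gamma(u))=e$ for every $u$, and then $Th\circ\big((\alpha,\beta)\times\eta\big)\circ(1_U,\gamma)$, with $\eta$ the zero section of $TF$, is the required lift into $T_e^H(E)$. Once the $T^H$ case is corrected, the $dvs$ statement follows formally from the $T^H$ one (the paper cites a remark from their earlier work), so your separate treatment of the $dvs$ plots is unnecessary.
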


Diffeological bundles are pervasive, so this result applies to many situations.
An immediate corollary is that $T^H_{eH}(G/H)$ and $T^{dvs}_{eH}(G/H)$ agree
for any homogeneous space $G/H$ (Corollary~\ref{co:group-exact}).
In fact, we show that the bundles $T^H(G/H)$ and $T^{dvs}(G/H)$ agree (Proposition~\ref{prop:H=dvs-homogeneous}).

We can strengthen this theorem under an additional hypothesis.
The \dfn{germ category} $\cG(B,b)$ of a pointed diffeological space $(B,b)$
is the category whose objects are the smooth pointed maps $(U,0) \to (B,b)$,
for $U$ open in some $\R^n$, and whose morphisms are smooth germs
$(U,0) \to (V,0)$ commuting with the maps to $(B,b)$.
We say that a pointed diffeological space $(B,b)$ is \dfn{filtered} 
if the category $\cG(B,b)$ is filtered.
Manifolds are filtered, as is any irrational torus $T_{\theta}$ and any fine diffeological vector space.

\begin{thmintro}[Theorem~\ref{th:i-injective}]\label{th:2}
In the setting of Theorem~\ref{th:1}, if $(E,e)$ and $(B,b)$ are filtered, then $\io_*$ is injective.
\end{thmintro}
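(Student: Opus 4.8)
The plan is to show that any $\tau\in T_e(F)$ with $\io_*\tau=0$ is already zero in $T_e(F)$. The engine is that, since $(E,e)$ and $(B,b)$ are filtered, the tangent spaces $T_e(E)$ and $T_b(B)$ are filtered colimits of the functor $(U,q)\mapsto T_0(U)$ over the germ categories $\cG(E,e)$ and $\cG(B,b)$. I will use two standard facts about filtered colimits of vector spaces: finitely many elements can always be represented on a single object, and an element $[q,w]$ vanishes iff $h_*w=0$ for some morphism $h$ out of $(U,q)$. Note that no such statement is available for $T_e(F)$ itself, since $(F,e)$ is not assumed filtered; the whole point is to transport information from $E$ and $B$ back to $F$.

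Write $\tau=\sum_k[p_k,v_k]$ as a finite sum, with $p_k\colon(U_k,0)\to(F,e)$. Since $\io_*\tau=\sum_k[\io p_k,v_k]=0$ in the filtered colimit $T_e(E)$, I first choose a single plot $r\colon(W,0)\to(E,e)$ receiving germs $g_k\colon(U_k,0)\to(W,0)$ with $rg_k=\io p_k$, so that $\io_*\tau=[r,\sum_k (g_k)_*v_k]$. Passing along one further germ out of $(W,r)$, I may assume in addition that $V:=\sum_k (g_k)_*v_k=0$ in $T_0(W)$. At this stage everything is organized on one plot $r$ of $E$, with $V=0$.

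Now set $\beta=\pi r\colon W\to B$, a plot through $b$. Each $\beta g_k$ is the constant germ at $b$, so $g_k$ and the constant germ form a parallel pair $(U_k,\mathrm{const}_b)\rightrightarrows(W,\beta)$ in $\cG(B,b)$. Coequalizing these finitely many pairs one after another --- possible because $\cG(B,b)$ is filtered --- yields a single germ $m\colon(W,0)\to(W',0)$ and a plot $\beta'\colon W'\to B$ with $\beta'm=\beta$ and $mg_k=\mathrm{const}_0$ for all $k$. Because $\pi$ is a diffeological bundle, the pullback of $\pi$ along $\beta'$ is trivial; I fix a trivialization $\Phi'\colon W'\times_B E\xrightarrow{\sim}W'\times F$ restricting to the identity of $F=\pi^{-1}(b)$ over $0$, and define the plot $s:=\pr_F\circ\Phi'\circ(m,r)\colon W\to F$. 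Since $mg_k\equiv 0$ and $rg_k=\io p_k$, a direct evaluation gives $sg_k=p_k$. Thus each $g_k$ is a morphism $(U_k,p_k)\to(W,s)$ in $\cG(F,e)$, so $[p_k,v_k]=[s,(g_k)_*v_k]$, and therefore $\tau=[s,V]=[s,0]=0$, as desired.

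The main obstacle is the production of a single plot of $F$ --- on an open domain --- through which all the $p_k$ factor without altering their first-order data. One cannot simply restrict $r$ to the fibre $\beta^{-1}(b)$, which need not be open in Euclidean space, and trivializing $E$ over $\beta$ directly would compose each $p_k$ with a nontrivial transition diffeomorphism. The germ $m$ from the filtered base is exactly what removes this twist: pulling the trivialization over $\beta'$ back along $m$ makes it standard along $mg_k\equiv 0$, so the transition diffeomorphism seen by each $p_k$ becomes the identity and $sg_k=p_k$ holds on the nose. This interplay --- filteredness of $E$ to consolidate onto one plot, filteredness of $B$ to straighten it, and local triviality to land in $F$ --- is the heart of the argument.
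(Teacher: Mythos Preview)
Your proof is correct and follows essentially the same route as the paper's: use filteredness of $(E,e)$ to consolidate all the $\io p_k$ onto a single plot $r$ with $\sum_k (g_k)_* v_k = 0$; use filteredness of $(B,b)$ to coequalize each $g_k$ with the zero germ via a germ $m$; then trivialize $\pi$ over the resulting plot of $B$ (normalizing the trivialization to be the identity on $F$ over $0$) to produce a plot $s$ of $F$ with $s g_k = p_k$, so the relation $\sum_k (g_k)_* v_k = 0$ already holds in $\cG(F,e)$. The only cosmetic differences are notational (the paper writes $f_i,q,g,r,W$ where you write $g_k,r,m,\beta',W'$) and that the paper handles the normalization of the trivialization by citing the analogous step in the proof of Theorem~\ref{th:bundle-exact2}, whereas you state it directly.
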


Note that an inclusion $A \subseteq X$ does not in general induce an injection
on tangent spaces.

Our final result also involves filtered diffeological spaces.
We say that $T^H X$ is a \dfn{diffeological vector space over $X$} if
scalar multiplication $T^H X \times \R \to T^H X$ and
addition $T^H X \times_X T^H X \to T^H X$ are smooth.
The diffeology on $T^{dvs} X$ is the smallest diffeology 
containing Hector's diffeology and making $T^{dvs} X$ into a
diffeological vector space over $X$.

\begin{thmintro}[Theorems~\ref{th:H=dvs} and~\ref{th:fine}]\label{th:3}
If $X$ is filtered, then $T^H X$ is a diffeological vector space over $X$.
Therefore, $T^H X$ and $T^{dvs} X$ agree.
Moreover, each $T_x(X)$ has the fine diffeology.
\end{thmintro}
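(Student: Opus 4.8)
The plan is to reduce the whole statement to the smoothness of fibrewise addition, and to run that reduction through the description of the internal tangent space as a filtered colimit. Recall that for a plot $p\colon U\to X$ the tangent map $Tp\colon TU=U\times\R^n\to T^HX$ sends $(u,v)$ to the internal tangent vector at $p(u)$ represented by the germ $t\mapsto p(u+t)$, and that Hector's diffeology is by definition the smallest one making every $Tp$ smooth; thus $q\colon W\to T^HX$ is a plot iff near each point of $W$ it factors as $Tp\circ\sigma$ for a plot $p$ of $X$ and a smooth $\sigma\colon W\to TU$. Scalar multiplication is smooth with no hypothesis at all: if $q=Tp\circ(u,v)$ and $\lambda\colon W\to\R$ is smooth, then $\lambda\cdot q=Tp\circ(u,\lambda v)$. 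So the only real content in ``$T^HX$ is a diffeological vector space over $X$'' is the smoothness of addition, and I claim the other two assertions follow formally from it. Indeed, once addition is smooth, $T^HX$ already satisfies the defining property of $T^{dvs}X$, so the smallest diffeology with that property is Hector's own and $T^HX=T^{dvs}X$. For fineness, each fibre $T_x(X)$ with the subspace diffeology is then a diffeological vector space, hence its diffeology is contained in the fine one (the finest vector space diffeology); conversely any fine plot is locally a finite sum $\sum_j f_j\,e_j$ with $f_j\colon W\to\R$ smooth and $e_j\in T_x(X)$, and writing $e_j=(\gamma_j)_*(a_j)$ for a pointed plot $\gamma_j\colon(\R^{n_j},0)\to(X,x)$ gives $f_j\,e_j=T\gamma_j(0,f_j a_j)$, a Hector plot over the constant base point $x$; summing these using the smoothness of addition shows the fine plot is a Hector plot. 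Hence the two diffeologies on the fibre agree.

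It remains to prove that addition $T^HX\times_X T^HX\to T^HX$ is smooth, and for this the workhorse is a family version of the filtered condition:
\begin{quote}
\emph{If $X$ is filtered and $q_1,q_2\colon W\to T^HX$ are plots over a common plot $g=\pi q_1=\pi q_2$, then near each $w_0\in W$ there are a single plot $p\colon U\to X$, a smooth map $\bar u\colon W\to U$ with $p\circ\bar u=g$, and smooth maps $\eta_1,\eta_2\colon W\to\R^N$ such that $q_i=Tp\circ(\bar u,\eta_i)$ for $i=1,2$.}
\end{quote}
Granting this, addition is immediate: $q_1+q_2=Tp\circ(\bar u,\eta_1+\eta_2)$ is again a Hector plot, so $+$ is smooth.

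To prove the family lemma I would proceed in two steps. First, working near $w_0$, I write $q_i=Tp_i\circ(u_i,v_i)$ and absorb the varying base point into the plot by replacing $p_i$ with the parametrized plot $P_i\colon W\times\R^{n_i}\to X$, $(w,s)\mapsto p_i(u_i(w)+s)$; then $P_i(w,0)=g(w)$ and $q_i(w)=TP_i((w,0),(0,v_i(w)))$, so that all base-point maps are now the single map $w\mapsto(w,0)$ and the two plots $P_1,P_2$ agree along $W\times\{0\}$. Second, I want to dominate $P_1$ and $P_2$ by a single plot $P\colon W\times\R^N\to X$ together with smooth maps $\phi_i$ that preserve the $W$-coordinate, satisfy $\phi_i(w,0)=(w,0)$, and give $P\circ\phi_i=P_i$ as germs along $W\times\{0\}$. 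Once such $P$ and $\phi_i$ exist, the chain rule applied to the pure fibre directions $(0,v_i)$, along which the $W$-component of $d\phi_i$ vanishes, yields $q_i=TP\circ((w,0),(0,\eta_i))$ with $\eta_i$ smooth, which is exactly the conclusion with $\bar u(w)=(w,0)$.

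The main obstacle is this second step: the hypothesis that $\cG(X,g(w_0))$ is filtered only lets me combine the two pointed plots $P_1(w_0,\cdot)$ and $P_2(w_0,\cdot)$ over the single base point $g(w_0)$, whereas the family $q_1,q_2$ forces me to combine them compatibly over all nearby base points $g(w)$ at once, and with one common base-point map. The plan is to obtain this relative statement from the pointwise one by exploiting that everything is local and only germs along $W\times\{0\}$ are needed: the connecting germs produced by filteredness at $g(w_0)$ are germs of smooth maps between Euclidean domains and therefore depend smoothly on the parameter $w$, while the $W$-coordinate can be carried along as a free parameter in the combination so that the resulting $\phi_i$ are fibrewise combinations over $W$ fixing $W\times\{0\}$. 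Verifying that the single-point filtered condition really can be upgraded in this parametrized way, yielding a common base-point map, is the crux of the argument; the finiteness built into a filtered colimit (only finitely many plots are involved near $w_0$) is what I expect to make the upgrade possible.
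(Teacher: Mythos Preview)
Your reduction to the smoothness of addition is correct, but both the addition argument and the fineness argument have genuine gaps.

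\textbf{Addition.} After absorbing the base point you set yourself the task of dominating the parametrized plots $P_1,P_2\colon W\times\R^{n_i}\to X$ by a single $P$ via maps that \emph{preserve the $W$-coordinate}, and you correctly flag this ``family upgrade'' as the crux --- but you do not carry it out, and in fact no such upgrade is needed. The paper avoids the parametrized formulation entirely. Writing each $q_i$ locally as $Tp_i\circ f_i$ with $p_i:(V_i,0)\to(X,x)$ a pointed plot (translating so that $\pi_{V_i}(f_i(0))=0$), the $p_i$ are ordinary objects of $\cG(X,x)$ at the single point $x$, so weak filteredness already yields a common $p:(V,0)\to(X,x)$ through which both factor. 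After this step one has $f_1,f_2:W\to TV$ and two foot-maps $g_i:=\pi_V\circ f_i:W\to V$ satisfying $p\circ g_1=p\circ g_2$ (this is exactly the hypothesis $\pi\circ q_1=\pi\circ q_2$). The key observation you are missing is that $g_1,g_2:(W,0)\to(V,0)$ are then \emph{parallel morphisms in $\cG(X,x)$} from the pointed plot $p\circ g_i$ to $p$, so the \emph{coequalizer} clause of filteredness --- applied once, at the single point $x$ --- produces a germ $h:(V,0)\to(W',0)$ and a plot $r$ with $r\circ h=p$ and $h\circ g_1=h\circ g_2$ as germs at $0$. Shrinking $W$, the maps $Th\circ f_i$ have a common foot in $W'$, and $q_1+q_2=Tr\circ(Th\circ f_1+Th\circ f_2)$ is a Hector plot. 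No parametrized or family version of filteredness is required; the entire argument lives in $\cG(X,x)$ for one fixed $x$.

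\textbf{Fineness.} Here your direction of containment is backwards. The fine diffeology is by definition the \emph{smallest} vector-space diffeology, so knowing that $T_x(X)$ is a diffeological vector space yields only the inclusion $\text{fine}\subseteq\text{Hector}$; your ``conversely'' paragraph then re-proves that same inclusion (a fine plot is a sum of Hector plots, hence Hector once addition is smooth). The substantive direction --- that every Hector plot into the fibre $T_x(X)$ is a fine plot --- does not follow formally from smoothness of addition and requires a second, separate use of filteredness. The paper's argument: a Hector plot into $T_x(X)$ is locally $Tp\circ f$ with $p\circ\pi_V\circ f$ constant at $x$; thus $\pi_V\circ f$ and the zero germ are parallel morphisms from the constant plot to $p$ in $\cG(X,x)$, and the coequalizer clause produces $h$ with $h\circ\pi_V\circ f$ constant near $0$, so that $Th\circ f$ lands in the finite-dimensional linear space $T_0W'$ and the plot factors through a linear map $T_0W'\to T_x(X)$, exhibiting it as fine.
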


Thus, in addition to homogeneous spaces, we have another class of diffeological spaces for which we know that these agree.

\medskip

The paper is organized as follows.
In Section~\ref{s:background}, we provide a brief summary of diffeological spaces,
diffeological bundles and internal tangent spaces.
In Section~\ref{s:exact}, we prove Theorem~\ref{th:1}, using a lemma which characterizes
the zero vectors in an internal tangent space.
In Section~\ref{s:filtered}, we define filtered diffeological spaces and weakly filtered
diffeological spaces.  We gives examples of both, and then prove Theorem~\ref{th:2}.
We give some other properties of weakly filtered diffeological spaces,
and conclude with Theorem~\ref{th:3}.

\subsection*{Conventions}
Throughout this paper, we make the following assumptions, unless otherwise stated.
Every vector space is over the field $\R$ of real numbers, and every linear function is $\R$-linear.
All manifolds are smooth, finite-dimensional, Hausdorff, second countable and 
without boundary, and are equipped with the standard diffeology.
Every subset of a diffeological space is equipped with the sub-diffeology,
every quotient is equipped with the quotient diffeology,
every product is equipped with the product diffeology,
and every function space is equipped with the functional diffeology.
All maps, sections and germs are assumed to be smooth.
We use the word ``function'' for a set map that may not be smooth.

\section{Background}\label{s:background}

In this section, we briefly recall some background on diffeological spaces,
including the notion of diffeological bundle and the internal tangent space.
For further details, we recommend the standard textbook~\cite{I}.
For a concise introduction to diffeological spaces, we recommend~\cite{CSW},
particularly Section~2 and the introduction to Section~3.
The material on tangent spaces is from~\cite{CW}.

\subsection{Basics}

\begin{de}[\cite{So2}]\label{de:diffeological-space}
A \dfn{diffeological space} is a set $X$
together with a specified set of functions $U \ra X$ (called \dfn{plots})
for each open set $U$ in $\R^n$ and each $n \in \N$,
such that for all open subsets $U \subseteq \R^n$ and $V \subseteq \R^m$:
\begin{enumerate}
\item (Covering) Every constant function $U \ra X$ is a plot;
\item (Smooth Compatibility) If $U \ra X$ is a plot and $V \ra U$ is smooth,
then the composite $V \ra U \ra X$ is also a plot;
\item (Sheaf Condition) If $U=\cup_i U_i$ is an open cover
and $U \ra X$ is a function such that each restriction $U_i \ra X$ is a plot,
then $U \ra X$ is a plot.
\end{enumerate}

A function $f:X \rightarrow Y$ between diffeological spaces is
\dfn{smooth} if for every plot $p:U \ra X$ of $X$,
the composite $f \circ p$ is a plot of $Y$.
\end{de}

Write $\Diff$ for the category of diffeological spaces and smooth maps.
Given two diffeological spaces $X$ and $Y$,
we write $C^\infty(X,Y)$ for the set of all smooth maps from $X$ to $Y$.
An isomorphism in $\Diff$ will be called a \dfn{diffeomorphism}.

Every manifold $M$ is canonically a diffeological space with the
plots taken to be all smooth maps $U \ra M$ in the usual sense.
We call this the \dfn{standard diffeology} on $M$.
It is easy to see that smooth maps in the usual sense between
manifolds coincide with smooth maps between them with the standard diffeology.

For a diffeological space $X$ with an equivalence relation~$\sim$,
the \dfn{quotient diffeology} on $X/{\sim}$ consists of all functions
$U \ra X/{\sim}$ that locally factor through the quotient map $X \to X/{\sim}$ via plots of $X$.
A \dfn{subduction} is a map diffeomorphic to a quotient map.
That is, it is a map $X \to Y$ such that the plots in $Y$
are the functions that locally lift to $X$ as plots in $X$.

For a diffeological space $Y$ and a subset $A$ of $Y$,
the \dfn{sub-diffeology} consists of all functions $U \ra A$ such that 
$U \ra A \hookrightarrow Y$ is a plot of $Y$.

The \dfn{discrete diffeology} on a set is the diffeology whose plots are
the locally constant functions.
\medskip

The category of diffeological spaces is very well-behaved:

\begin{thm}
The category $\Diff$ is complete, cocomplete and cartesian closed.
\end{thm}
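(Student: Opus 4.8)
The plan is to reduce completeness and cocompleteness to the fact that the forgetful functor $U \colon \Diff \to \Set$ is \emph{topological}, and to treat cartesian closedness separately by constructing explicit exponential objects from the functional diffeology. Recall that a concrete functor to $\Set$ is topological when every structured source admits an initial lift and every structured sink admits a final lift; for such functors, the base category being complete and cocomplete forces the total category to be complete and cocomplete, with limits and colimits created by $U$. So the first step is to exhibit these initial and final lifts, and the second, independent step is to verify the exponential adjunction.

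First I would treat limits. Given a diagram in $\Diff$ with underlying sets $X_i$, form the limit $L$ of the $X_i$ in $\Set$ and equip it with the \emph{initial diffeology}: declare $p \colon U \to L$ to be a plot exactly when every composite of $p$ with a projection $L \to X_i$ is a plot of $X_i$. The three axioms of Definition~\ref{de:diffeological-space} are immediate for this family, since constants, precompositions by smooth maps, and the sheaf condition are all checked projection-by-projection and each $X_i$ already satisfies them. By construction every projection is smooth, and a cone into the $X_i$ is smooth iff it is smooth into $L$, which is precisely the universal property. The structural fact making this work in general is that an arbitrary intersection of diffeologies on a fixed set is again a diffeology, so a coarsest diffeology with a prescribed collection of smooth maps out of $L$ always exists. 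Dually, for colimits I would form the colimit $C$ of the $X_i$ in $\Set$ and give it the \emph{final diffeology}, in which a function $U \to C$ is a plot iff it locally factors, via a plot of some $X_i$, through the corresponding structure map $X_i \to C$; the sheaf condition is built into the ``locally factors'' clause, and this is the finest diffeology making all structure maps smooth. Quotients (as recalled in the text) and coproducts are the basic instances, and together these give completeness and cocompleteness.

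For cartesian closedness I would use the functional diffeology on $C^\infty(X,Y)$: a function $p \colon U \to C^\infty(X,Y)$ is a plot iff the adjoint function $U \times X \to Y$, $(u,x) \mapsto p(u)(x)$, is smooth for the product diffeology on $U \times X$. After checking that this is a diffeology, the goal is the natural bijection
\[
C^\infty(Z \times X, Y) \;\cong\; C^\infty\bigl(Z, C^\infty(X,Y)\bigr),
\]
which on underlying sets is ordinary currying; the content is that a map and its curried form are smooth together.

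The main obstacle is exactly this last equivalence. One direction uses that composing a plot of $Z$ with a plot of $X$ yields a plot of $Z \times X$, hence a plot of $Y$; the reverse direction requires showing that plotwise smoothness of $Z \to C^\infty(X,Y)$ upgrades to genuine smoothness of $Z \times X \to Y$. Verifying the sheaf axiom for the functional diffeology and controlling this local-to-global passage through the product diffeology is the delicate point. Once it is in hand, evaluation $C^\infty(X,Y) \times X \to Y$ is smooth and the bijection is natural in all three variables, giving the adjunction $(-\times X) \dashv C^\infty(X,-)$ and hence cartesian closedness.
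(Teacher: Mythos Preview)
The paper does not actually prove this theorem; it is stated as background and the text immediately following it simply refers the reader to~\cite[Section~2]{CSW} for the descriptions of limits, colimits and function spaces. So there is no in-paper argument to compare against.

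Your proposal is correct and is essentially the standard argument one finds in the cited reference: initial and final diffeologies furnish (co)limits created by the forgetful functor to $\Set$, and the functional diffeology supplies exponentials. Framing the first part via ``$U$ is topological'' is a clean way to package the existence of initial/final lifts, though the paper's reference presents the constructions directly rather than invoking that terminology. One small remark: the step you flag as the ``main obstacle'' for cartesian closure is less delicate than you suggest. Given smooth $g \colon Z \to C^\infty(X,Y)$ and a plot $(p,q) \colon U \to Z \times X$, the composite $g \circ p$ is a plot of $C^\infty(X,Y)$, so its adjoint $U \times X \to Y$ is smooth; precomposing with the smooth map $(\id_U, q) \colon U \to U \times X$ gives exactly $u \mapsto g(p(u))(q(u))$, which is therefore a plot of $Y$. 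The converse direction is even more immediate. No genuine local-to-global subtlety arises beyond what is already encoded in the definitions of the product and functional diffeologies.
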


The descriptions of limits, colimits and function spaces are quite
simple, and are concisely described in~\cite[Section~2]{CSW}.
The underlying set of a (co)limit is the (co)limit of the underlying sets.
The underlying set of the function space from $X$ to $Y$ is the set $C^\infty(X, Y)$.
\medskip

We can associate to every diffeological space the following topology:

\begin{de}[\cite{I1}]
Let $X$ be a diffeological space.
A subset $A$ of $X$ is \dfn{$D$-open} if $p^{-1}(A)$ is open in $U$ for
each plot $p : U \to X$.
The collection of $D$-open subsets of $X$ forms a topology on $X$ called the \dfn{$D$-topology}.
\end{de}

We will make use of the concepts of diffeological group and
diffeological vector space at several points, so we present them here.

\begin{de}\label{def:diff-group}
A \dfn{diffeological group} is a group object in $\Diff$.
That is, a diffeological group is both a diffeological space and a group
such that the group operations are smooth.
\end{de}

\begin{de}\label{def:dvs}
A \dfn{diffeological vector space} is a vector space object in $\Diff$.
More precisely, it is both a diffeological space and a vector space
such that the addition and scalar multiplication are smooth.
\end{de}

\subsection{Diffeological bundles}\label{ss:bundles}

Diffeological bundles are analogous to fiber bundles, but
are more general than the most obvious notion of locally trivial bundle.
We review diffeological bundles in this subsection,
and refer the reader to~\cite{I} for more details.

\begin{de}\label{de:diffbundle}
Let $F$ be a diffeological space.
A \dfn{diffeological bundle with fiber $F$} is a map $f: X \to Y$
between diffeological spaces such that for every plot $p: U \to Y$,
the pullback $p^*(f)$ of $f$ along $p$ is locally trivial with fiber $F$.
That is, there is an open cover $U = \cup_i U_i$ such that for each $i$
the restriction of $p^*(f)$ to $U_i$ is diffeomorphic, over $U_i$, to $U_i \times F$.
\end{de}

In~\cite{I}, diffeological bundles are defined using groupoids,
but~\cite[8.9]{I} shows that the definitions are equivalent.

\begin{ex}
Every smooth fiber bundle over a manifold is a diffeological bundle.
\end{ex}

\begin{prop}[{\cite[8.15]{I}}]\label{pr:diffgroup-diffbundle}
Let $G$ be a diffeological group, and let $H$ be a subgroup of $G$.
Then the projection $G \ra G/H$ is a diffeological bundle with fiber $H$,
where $G/H$ is the set of left (or right) cosets of $H$ in $G$ with the quotient diffeology.
\end{prop}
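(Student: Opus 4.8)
The plan is to verify the definition of diffeological bundle directly: for an arbitrary plot $p : U \to G/H$, I must produce an open cover of $U$ over which the pullback $p^*(\pi)$ is trivial with fiber $H$, where $\pi : G \to G/H$ denotes the projection.

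The starting point is that $G/H$ carries the quotient diffeology, so $\pi$ is a subduction. By the defining property of subductions, the plot $p$ lifts locally through $\pi$: there is an open cover $U = \bigcup_i U_i$ together with plots $\tilde p_i : U_i \to G$ satisfying $\pi \circ \tilde p_i = p|_{U_i}$, that is, $\tilde p_i(u) H = p(u)$ for every $u \in U_i$. These local lifts are the only input the subduction hypothesis supplies, but they are exactly what is needed.

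Next I would build a trivialization over each $U_i$. Writing the total space of the pullback as $p^*(G) = \{(u,g) \in U \times G : gH = p(u)\}$, I define $\phi_i : p^*(G)|_{U_i} \to U_i \times H$ by $\phi_i(u,g) = (u, \tilde p_i(u)^{-1} g)$; this is well defined because $gH = \tilde p_i(u) H$ forces $\tilde p_i(u)^{-1} g \in H$. Its candidate inverse is $\psi_i(u,h) = (u, \tilde p_i(u)\, h)$, which lands in $p^*(G)$ since $\tilde p_i(u)\, h H = \tilde p_i(u) H = p(u)$. The two maps are visibly mutually inverse and commute with the projections to $U_i$, so the only remaining point is smoothness in both directions.

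The smoothness is where the group structure does its work, and I expect it to be the sole point requiring care. Both formulas are assembled from $\tilde p_i$ (a plot of $G$) together with multiplication and inversion in $G$, which are smooth because $G$ is a diffeological group; since $H$ and $p^*(G)$ carry the sub-diffeologies from $G$ and from $U \times G$ respectively, corestricting these composites to the relevant subspaces preserves smoothness. Hence each $\phi_i$ is a diffeomorphism over $U_i$, the pullback $p^*(\pi)$ is locally trivial with fiber $H$, and therefore $\pi$ is a diffeological bundle with fiber $H$. The case of right cosets is identical after replacing left translations by right translations.
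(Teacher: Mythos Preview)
Your argument is correct and is essentially the standard proof of this fact. Note that the paper does not give its own proof of this proposition: it is stated with a citation to~\cite[8.15]{I}, so your write-up supplies the details that the paper defers to that reference, and it matches the argument found there.
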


Note that we are \emph{not} requiring the subgroup $H$ to be closed.

\subsection{The internal tangent space}\label{ss:tangent}
Let $\DS_0$ be the category with
objects all connected open neighbourhoods of $0$ in $\R^n$ for all $n \in \N$
and morphisms the maps between them sending $0$ to $0$.
Given a pointed diffeological space $(X,x)$,
we define $\DS_0/(X,x)$ to be the category
with objects the pointed plots $p:(U,0) \ra (X,x)$ such that $U$ is in $\DS_0$,
and morphisms the commutative triangles
\[
\xymatrix@C5pt{U \ar[dr]_-p \ar[rr]^f & & V \ar[dl]^-q \\ & X , }
\]
where $p$ and $q$ are in $\DS_0/(X,x)$ and $f$ is a map with $f(0) = 0$.
We call $\DS_0/(X,x)$ the \dfn{category of plots of $X$ centered at $x$}.
It is the comma category of the natural functor from $\DS_0$ to
$\Diff_{*}$, the category of pointed diffeological spaces.

\begin{de}[\cite{He}]
Let $(X,x)$ be a pointed diffeological space.
The \dfn{internal tangent space} $T_x(X)$ of $X$ at $x$ is
the colimit of the composite of functors
$\DS_0/(X,x) \ra \DS_0 \ra \Vect$,
where $\Vect$ denotes the category of vector spaces and linear functions,
the first functor is the forgetful functor,
and the second functor is $T_0$, the usual tangent functor at $0$.
Given a pointed plot $p : (U,0) \to (X,x)$ and an element $u \in T_0(U)$,
we write $p_*(u)$ for the element these represent in the colimit.
This construction naturally defines a functor $T:\Diff_* \ra \Vect$.
\end{de}

\begin{rem}\label{rem:germcat}
The category of plots of a diffeological space centered at a point
is usually complicated, and it is usually more convenient to work
with the following variant.
This variant plays a central role in the present paper.
Let $\cG(X,x)$ be the category with the same objects as $\DS_0/(X,x)$,
but with morphisms the germs at $0$ of morphisms in $\DS_0/(X,x)$.
In more detail, morphisms from $p: (U,0) \ra (X,x)$ to $q: (V,0) \ra (X,x)$ in $\cG(X,x)$
consist of equivalence classes of maps $f : W \to V$,
where $W$ is an open neighborhood of $0$ in $U$ and $p|_W = q \circ f$.
Two such maps are equivalent if they agree on an open neighborhood of $0$ in $U$.
The internal tangent space can equivalently be defined using
$\cG(X,x)$ in place of $\DS_0/(X,x)$.
\end{rem}

It is shown in~\cite[Proposition 3.7]{CW} that
$T_{(x_1,x_2)}(X_1 \times X_2) \cong T_{x_1}(X_1) \times T_{x_2}(X_2)$
for any pointed diffeological spaces $(X_1, x_1)$ and $(X_2, x_2)$.

The \dfn{internal tangent bundle} $TX$ is the disjoint union of $T_x(X)$ for $x \in X$.
It can be equipped with two natural diffeologies, indicated
by superscripts: $T^H X$ and $T^{dvs} X$.
See~\cite[Subsection~4.1]{CW} for the descriptions of these diffeologies.
The fibers with the sub-diffeologies are written $T^H_x X$ and $T^{dvs}_x X$, respectively.

\section{Exact sequences of tangent spaces}\label{s:exact}

In this section, we show that a diffeological bundle gives rise to an
exact sequence of tangent spaces.

In order to prove our main result, we first need a lemma 
which characterizes the zero vectors in internal tangent spaces.

\begin{lem}\label{lem:kernel2}
Let $(X,x)$ be a pointed diffeological space,
let $p_i: (U_i,0) \ra (X,x)$ be distinct pointed plots, for $i = 1, \ldots, k$,
and let $u_i \in T_0(U_i)$ be such that $\sum_i (p_i)_*(u_i) = 0 \in T_x(X)$.
Then there exist pointed plots $p_i : (U_i, 0) \to (X, x)$ for $i = k+1, \ldots, n$,
distinct from each other and the earlier plots,
finitely many vectors $v_{ij} \in T_0(U_i)$ with 
\[
  \sum_j v_{ij} = \begin{cases}
                   u_i, & \text{if } i \leq k, \\
                   0  , & \text{if } i > k,
                 \end{cases}
\]
pointed plots $q_{ij} : (V_{ij}, 0) \to (X, x)$, and
germs $f_{ij} : (U_i, 0) \to (V_{ij}, 0)$ with $p_i = q_{ij} \circ f_{ij}$ as germs at $0$,
such that for each $q \in \{ q_{ij} \}$,
\[
  \sum_{\{(i,j) \mid q_{ij} = q\}} \, (f_{ij})_*(v_{ij}) = 0
\]
in $T_0(V_{ij})$.
\end{lem}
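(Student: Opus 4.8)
The plan is to unwind the definition of the colimit defining $T_x(X)$. By Remark~\ref{rem:germcat}, we may compute $T_x(X)$ as the colimit over the germ category $\cG(X,x)$ of the functor $T_0$. The starting hypothesis is that the formal sum $\sum_i (p_i)_*(u_i)$ vanishes in this colimit. The standard description of a colimit of vector spaces (or more generally of a colimit over a category) says that two formal sums of representatives are identified precisely when they are connected by a finite zig-zag of morphisms in the indexing category. Translating the statement "$\sum_i (p_i)_*(u_i) = 0$" means the formal sum in $\bigoplus_p T_0(U_p)$ lies in the subspace generated by relations of the form $q_*(w) - (q \circ g)_* (\ldots)$ coming from the morphisms $g$ of $\cG(X,x)$.

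**First I would** set up the colimit explicitly as a quotient $\left(\bigoplus_p T_0(U_p)\right)/R$, where the sum ranges over objects $p$ of $\cG(X,x)$ and $R$ is spanned by elements $(p, u) - (q, f_* u)$ for each germ $f : p \to q$ and each $u \in T_0(U_p)$. Saying $\sum_i (p_i)_*(u_i) = 0$ means the element $\sum_i (p_i, u_i)$ of the direct sum lies in $R$, hence equals a finite $\R$-linear combination of generators of $R$. Collecting all the plots $q$ that appear as \emph{targets} of the germs in this finite combination, together with the $p_i$ and any plots appearing as sources, gives the enlarged finite list of plots $p_{k+1}, \ldots, p_n$ and the auxiliary plots $q_{ij}$. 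The individual generators $(p_i, v_{ij}) - (q_{ij}, (f_{ij})_* v_{ij})$ are exactly the data the lemma asks for: each $v_{ij} \in T_0(U_i)$, each germ $f_{ij} : (U_i,0) \to (V_{ij},0)$ satisfies $p_i = q_{ij} \circ f_{ij}$, and the constraint that these generators sum back to $\sum_i (p_i, u_i)$ forces, upon projecting onto each summand, both $\sum_j v_{ij} = u_i$ (for $i \le k$), $\sum_j v_{ij} = 0$ (for the newly introduced $i > k$, whose net source-contribution must cancel), and $\sum_{\{(i,j)\,\mid\,q_{ij}=q\}} (f_{ij})_* v_{ij} = 0$ in $T_0(V)$ for each target plot $q = q_{ij}$.

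**The main bookkeeping step** is to partition the terms of the finite relation according to whether a given plot plays the role of a source or a target, and to verify that the projections onto each direct-summand $T_0(U_p)$ yield precisely the three displayed conditions. Concretely, each generator of $R$ contributes $+v_{ij}$ to the source-summand $T_0(U_i)$ and $-(f_{ij})_* v_{ij}$ to the target-summand $T_0(V_{ij})$; demanding that the total equal $\sum_i (p_i, u_i)$ means the source-summands must reproduce the $u_i$ (and vanish on the padding plots), while every target-summand contribution must cancel to zero, which is the final displayed equation. I would also note that a single plot could a priori appear as both a source and a target; the cleanest fix is to enlarge the list $\{p_i\}$ to include \emph{every} plot occurring anywhere, absorbing the $q_{ij}$ into this list where they coincide with some $p_i$, which is why the lemma introduces the extra plots $p_{k+1}, \ldots, p_n$ with $\sum_j v_{ij} = 0$.

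**The hard part will be** the honest handling of the distinctness and finiteness requirements and the interplay between sources and targets, since the colimit relations do not by themselves segregate plots into ``sources'' and ``targets.'' In particular one must check that any plot appearing only as a target (never among the original $p_i$) can be safely recorded as one of the $q_{ij}$ without needing a corresponding $v$-vector on it, and that plots appearing as both source and target are consistently indexed. This is essentially a careful rewriting of ``belongs to the relation subspace $R$'' into the indexed form demanded by the lemma, and once the partition is fixed the three identities follow by reading off coefficients; no analysis or smoothness input beyond the germ-category description of the colimit is needed.
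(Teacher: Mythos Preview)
Your overall framework matches the paper's: describe $T_x(X)$ as $F/R$ with $F = \bigoplus_r T_0(U_r)$ and $R$ spanned by basic relations $(r,v) - (q, g_*(v))$, write $\sum_i (p_i, u_i)$ as a finite sum of such relations, and then read off the conclusions by comparing coefficients summand by summand. You also correctly flag the genuine obstacle: a single plot may appear simultaneously as the source of one basic relation and the target of another.

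However, your proposed fix---``enlarge the list $\{p_i\}$ to include every plot occurring anywhere, absorbing the $q_{ij}$ into this list where they coincide with some $p_i$''---does not resolve this. If a plot $p$ occurs both as a source in some relations and as a target in others, projecting the identity $\sum_i (p_i, u_i) = \sum_\ell[(r_\ell, v_\ell) - (q_\ell, (g_\ell)_*(v_\ell))]$ onto the summand $T_0(U_p)$ yields only the \emph{single} equation
\[
  (\text{source contributions at }p) - (\text{target contributions at }p) = u_i \ \text{(or }0\text{)},
\]
whereas the lemma demands the two \emph{separate} equalities $\sum_j v_{ij} = u_i$ and $\sum (f_{ij})_*(v_{ij}) = 0$. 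Relabelling cannot split one linear constraint into two.

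The paper supplies the missing idea: before comparing coefficients, rewrite each basic relation so that its target has domain of strictly higher dimension than any source. Let $d$ be the maximal dimension among all $U_{r_\ell}$ and $U_{q_\ell}$; replace each target $q_\ell$ by $s_\ell := q_\ell \circ \pr_1 : U_{q_\ell} \times \R^{d+1} \to X$ via
\[
  (r_\ell, v_\ell) - (q_\ell, (g_\ell)_* v_\ell)
  = \big[(r_\ell, v_\ell) - (s_\ell, (h_\ell g_\ell)_* v_\ell)\big]
   - \big[(q_\ell, (g_\ell)_* v_\ell) - (s_\ell, (h_\ell g_\ell)_* v_\ell)\big],
\]
where $h_\ell$ is the inclusion into the first factor. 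Now every target $s_\ell$ has domain of dimension $> d$ and is therefore distinct from every source $r_\ell$, $q_\ell$ (each of dimension $\le d$). With sources and targets forcibly disjoint, your coefficient comparison goes through cleanly and yields both displayed equations. This dimension-raising trick is the substantive step your proposal is missing.
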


\begin{proof}
From the description of $T_x(X)$ as a colimit indexed by the
category $\cG(X,x)$, we can describe $T_x(X)$ as a quotient vector space $F/R$.
Here $F = \oplus_r T_0(U_r)$, where the sum is indexed over pointed plots $r : (U_r,0) \to (X,x)$,
and we write an element of the summand indexed by $r$ as $(r, v)$.
The linear subspace $R$ is the span of the ``basic relations'' $(r,v) - (q, g_*(v))$,
where $r : (U_r, 0) \to (X,x)$ and $q : (U_q,0) \to (X,x)$ are pointed plots,
$g : (U_r,0) \to (U_q,0)$ is a germ with $r = q \circ g$ as germs at $0$,
and $v$ is in $T_0(U_r)$.

Since $\sum_i (p_i)_*(u_i)$ is zero in $T_x(X)$, $\sum_i (p_i,u_i) \in F$ can be expressed as
\[
  \sum_{i=1}^k (p_i, u_i) = \sum_{\ell=1}^m \, (r_\ell, v_\ell) - (q_\ell, (g_\ell)_*(v_\ell))
\]
for $r_\ell, q_\ell, g_\ell$ and $v_\ell$ as in the previous paragraph.
Let $d$ be the maximum of the dimensions of the domains $U_{r_\ell}$ and $U_{q_\ell}$ of 
the plots appearing on the right-hand side of this equation.
For each $\ell$, choose an extension of $q_\ell$ to a plot $s_\ell$ where $U_{s_\ell}$ has
dimension bigger than $d$, so $q_\ell$ is the composite
\[
  \xymatrix{(U_{q_\ell},0) \ar[r]^{h_\ell} & (U_{s_\ell}, 0) \ar[r]^{s_\ell} & (X,x).}
\]
For example, take $U_{s_{\ell}} = U_{q_{\ell}} \times \R^{d+1}$ with $h_{\ell} = i_1$
and $s_{\ell} = q_{\ell} \circ \pr_1$, where $i_1$ and $\pr_1$ are the inclusion
into and projection onto the first factor.
Then
\[
\begin{aligned}
  \sum_{i=1}^k (p_i, u_i) &= \sum_{\ell=1}^m \, (r_\ell, v_\ell) - (q_\ell, (g_\ell)_*(v_\ell)) \\
 &= \sum_{\ell=1}^m\, \big[(r_\ell, v_\ell) - (s_\ell, (h_\ell)_*(g_\ell)_* (v_\ell))\big]
                  - \big[(q_\ell, (g_\ell)_*(v_\ell)) - (s_\ell, (h_\ell)_*(g_\ell)_* (v_\ell))\big]. \\
\end{aligned}
\]
This is a sum of basic relations, where all of the right-hand terms involve plots $s_\ell$
that are distinct from all of the plots $r_\ell$ and $q_\ell$ appearing in the left-hand terms
of each basic relation.
Relabel the $r_\ell$'s and $q_\ell$'s to be $p_1, \ldots, p_i, \ldots, p_n$, all distinct, and agreeing
with the given plots when $i \leq k$.
Since we may have had collisions in the relabelling, we label the
vectors $v_\ell$ and $(g_\ell)_*(v_\ell)$ as $v_{ij}$ when they are associated to the plot $p_i$.
Similarly, the germs $h_\ell \circ g_\ell$ and $h_\ell$ that appear in the above relations
are relabelled as $f_{ij}$, and $s_\ell$ is relabelled as $q_{ij}$.

In this new notation,
\[
  \sum_{i=1}^k (p_i, u_i) = \sum_{i=1}^n \sum_j \, (p_i, v_{ij}) - (q_{ij}, (f_{ij})_*(v_{ij})).
\]
Since the $q_{ij}$'s are distinct from all of the $p_i$'s, it must be the
case that the terms $(q_{ij}, (f_{ij})_*(v_{ij}))$ sum to zero, and the last
displayed equation of the statement expresses this fact grouped by the $q_{ij}$'s
that happen to agree.
Similarly, using that the $p_i$'s are distinct, and comparing terms on both
sides of the equation, gives the other displayed equation in the statement.
\end{proof}

\begin{rem}
It is not hard to show that the previous lemma in fact characterizes the zero tangent vectors.
The characterization is complicated because the category $\cG(X,x)$ is not filtered in general.
Compare Remark~\ref{rem:colimit}.
\end{rem}

\begin{thm}\label{th:bundle-exact2}
Suppose $\pi : E \to B$ is a diffeological bundle.
Let $e \in E$ and $b = \pi(e)$, and
write $\io : F \to E$ for the inclusion of the fiber over $b$.
Then
\[
\xymatrix{T_e(F) \ar[r]^{\io_*} & T_e(E) \ar[r]^-{\pi_*} & T_b(B) \ar[r] & 0}
\]
is an exact sequence of vector spaces.  Moreover, both
$\pi_*: T_e^H(E) \ra T_b^H(B)$ and $\pi_*: T_e^{dvs}(E) \ra T_b^{dvs}(B)$ are subductions.
\end{thm}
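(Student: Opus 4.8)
The plan is to prove the three assertions separately: the easy exactness ($\operatorname{im}\io_*\subseteq\ker\pi_*$ together with surjectivity of $\pi_*$), the reverse inclusion $\ker\pi_*\subseteq\operatorname{im}\io_*$, and the two subduction claims. First I would dispose of the routine parts. Since $\pi\circ\io$ is the constant map $F\to\{b\}$, it induces the zero map on tangent spaces, so $\pi_*\io_*=(\pi\io)_*=0$ and $\operatorname{im}\io_*\subseteq\ker\pi_*$. For surjectivity of $\pi_*\colon T_e(E)\to T_b(B)$ I would lift plots through $e$: given a pointed plot $p\colon(U,0)\to(B,b)$, the pullback $p^*(E)\to U$ is locally trivial with fibre $F$, so on a neighbourhood $U'$ of $0$ a trivialization together with the constant section through the point of the fibre corresponding to $e$ yields a pointed plot $\tilde p\colon(U',0)\to(E,e)$ with $\pi\tilde p=p|_{U'}$. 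Then $\pi_*(\tilde p_*(u))=p_*(u)$, so every generator of $T_b(B)$, and hence every element, is hit.

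The hard part is exactness at $T_e(E)$, and I expect this to be the main obstacle. Given $w=\sum_{i=1}^k(p_i)_*(u_i)$ with $\pi_*(w)=0$, I would apply Lemma~\ref{lem:kernel2} to the plots $\pi p_i$ and vectors $u_i$ in $B$. This supplies auxiliary plots $q_{ij}$ and germs $f_{ij}$ with $\pi p_i=q_{ij}\circ f_{ij}$, a decomposition $u_i=\sum_j v_{ij}$, and the vanishing $\sum_{\{(i,j)\mid q_{ij}=q\}}(f_{ij})_*(v_{ij})=0$ for each distinct $q$. I would lift every $q_{ij}$ and every base plot $\bar p_i$ (where $\bar p_i=\pi p_i$ for $i\le k$, and the remaining $\bar p_i$ are the auxiliary plots from the lemma) to pointed plots $\tilde q_{ij},\tilde p_i$ in $E$ through $e$, with $\tilde p_i=p_i$ for $i\le k$. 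Using these lifts one rewrites $w=\sum_{i,j}d_{ij}$ with $d_{ij}:=(\tilde p_i)_*(v_{ij})-(\tilde q_{ij}\circ f_{ij})_*(v_{ij})$, since the $q$-grouped terms $\sum_q(\tilde q)_*\big(\sum_{q_{ij}=q}(f_{ij})_*v_{ij}\big)$ vanish by the lemma. It then suffices to prove the key claim: if $a,a'\colon(U,0)\to(E,e)$ are pointed plots with $\pi a=\pi a'$ as germs, then $a_*(v)-a'_*(v)\in\operatorname{im}\io_*$. This I would prove by pulling $E$ back along $c:=\pi a=\pi a'$, trivializing $c^*(E)$ near $0$ as $U'\times F$, and viewing $a,a'$ as sections $t\mapsto(t,\beta(t))$ and $t\mapsto(t,\beta'(t))$. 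Writing $\Psi\colon U'\times F\to E$ for the resulting map, so that $\pi\Psi(t,y)=c(t)$ and $\Psi(0,-)$ lands in the fibre over $b$, the direct-sum splitting of $T_{(0,\hat e)}(U'\times F)$ as $T_0(U')\times T_0(F)$ from~\cite{CW} gives $a_*(v)-a'_*(v)=\Psi_*\big(0,\beta_*(v)-\beta'_*(v)\big)$, which, being tangent to the fibre, lies in $\operatorname{im}\io_*$. It is precisely here, in passing from a relation in the non-filtered colimit $T_b(B)$ to honest plots in $E$, that the real work lies.

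For the subductions the underlying maps $T_e(E)\to T_b(B)$ are already surjective, so it remains to lift plots of the fibres locally. Near any point a plot of $T^H_b(B)$ is a single Hector plot $Tp\circ\phi$ whose base is the constant map $b$, with $\psi:=\pr\circ\phi$ landing in $p^{-1}(b)$; the key construction is to produce a matching Hector plot of $T^H_e(E)$ over the constant base $e$. I would do this with a parameter-dependent family of lifts assembled into one smooth two-variable map $Q(w,u):=\Psi_p(u,\tau(w))$, where $\Psi_p$ trivializes $p^*(E)$ and $\tau(w)$ is the fibre coordinate of $e$ over $\psi(w)$; then $TQ$ applied to the plot $w\mapsto\big((w,0),(\psi(w),\phi(w))\big)$ is a Hector plot of $T^H_e(E)$ lifting the given one. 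Using this family rather than a single global lift of $p$ avoids any monodromy obstruction arising when $\psi$ fails to be injective. Finally, since the $T^{dvs}$-diffeologies are generated by the Hector plots together with fibrewise sums and scalar multiples, and all of the lifts above are produced over the one base $e$, the same construction applied summand by summand lifts plots of $T^{dvs}_b(B)$ to $T^{dvs}_e(E)$; hence both maps are subductions.
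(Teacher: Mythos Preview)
Your proposal is correct and follows essentially the same strategy as the paper: the easy exactness and surjectivity are handled identically, the hard inclusion $\ker\pi_*\subseteq\operatorname{im}\io_*$ is proved by applying Lemma~\ref{lem:kernel2} and then using local trivializations of the pullback bundle, and the subduction argument is the same parameter-dependent lift (your $\tau$ is the paper's $\gamma$, your $\Psi_p$ its $h$), with the $T^{dvs}$ case reduced to the $T^H$ case. The only cosmetic difference is that you factor the exactness computation through a clean standalone ``key claim'' (two plots with equal projection differ in $\operatorname{im}\io_*$), whereas the paper defines the fibre plots $r_{ij}=\pr_2\circ\phi_{ij}\circ(f_{ij},s_i)$ directly and verifies $\io_*\big(\sum(r_{ij})_*(v_{ij})\big)=v$ by an explicit chain of equalities; both unpack to the same use of the product splitting $T_{(0,e)}(V\times F)\cong T_0(V)\oplus T_e(F)$.
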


\begin{proof}
Since $\pi: E \ra B$ is a diffeological bundle, it is easy to see that $\pi_*$ is surjective,
because plots $(U,0) \to (B,b)$ lift locally to $(E,e)$.

Since $\pi \circ \io$ is constant, $\pi_* \circ \io_* = 0$.

To complete the proof of exactness, we need to show that if $v \in T_e(E)$ and $\pi_*(v) = 0$,
then $v = \io_*(w)$ for some $w \in T_e(F)$.
To illustrate the argument, we first treat the case that $v = s_*(\ddt)$
for some pointed plot $s: (\R,0) \ra (E,e)$,
and assume that there exist a pointed plot $q: (V,0) \ra (B,b)$ and a
germ $f: (\R,0) \ra (V,0)$ such that $f_*(\ddt)=0$ and $\pi \circ s = q \circ f$ as germs at $0$. 
We may assume that $V$ is an open disk in some Euclidean space.
Since $\pi$ is a diffeological bundle, we have a diffeomorphism
$\phi : q^* E \to V \times F$ commuting with the projections to $V$.
By replacing $\phi$ with the map $(\id_V \times \psi) \circ \phi$,
where $\psi$ is a diffeomorphism $F \to F$, we can ensure that the
following diagram commutes
\begin{equation}\label{eq:phi}
  \vcenter{\xymatrix{
    & & F \ar[d]^\io \ar@/_9pt/[lld]_{i_2} \\
    V \times F \ar[r]^(0.6){\phi^{-1}} & q^* E \ar[r]^g & E , \\
  }}
\end{equation}
where $i_2(x) = (0, x)$ and $g$ is the natural map.
Define a germ $r : (\R, 0) \to F$ as the composite
\begin{equation}\label{eq:r}
  \xymatrix{
  \R \ar[r]^-{(f,s)} & q^* E \ar[r]^-{\phi} & V \times F \ar[r]^-{\pr_2} & F ,
  }
\end{equation}
where the first map is understood as a germ at $0 \in \R$. Then $r(0)=e$, using~\eqref{eq:phi}.
 From the diagram
\[
  \xymatrix{
  \R \ar[r]^-{(f,s)} \ar[drr]_f & q^* E \ar[r]^-{\phi} \ar[dr] & V \times F \ar[r]^-{\pr_2} \ar[d]^{\pr_1} & F \\
                             &                              & V & ,
  }
\]
it follows that the image of $\ddt \in T_0(\R)$ in $T_{(0,e)}(V \times F) \cong T_0(V) \oplus T_e(F)$
is concentrated in the right-hand factor, or, in other words, is fixed
by $(i_2 \circ \pr_2)_*$.
Therefore,
\[
    \io_* r_*(\ddt)
  = g_* (\phi^{-1})_* (i_2)_* (\pr_2)_* \phi_* (f,s)_* (\ddt)
  = g_* (\phi^{-1})_* \phi_* (f,s)_* (\ddt)
  = g_* (f,s)_* (\ddt)
  = s_* (\ddt),
\]
and so $s_*(\ddt)$ is in the image of $\io_*$.
The first equality uses diagrams~\eqref{eq:phi} and~\eqref{eq:r}.

To handle the general case, let $v$ be of the form $\sum_{i=1}^k (s_i)_*(u_i)$ for pointed plots $s_i : (U_i, 0) \to (E, e)$
and $u_i \in T_0(U_i)$.
Let $p_i = \pi \circ s_i$ for each $i$.
Then, since $\pi_*(v) = \sum_i (p_i)_*(u_i) = 0$, we can apply Lemma~\ref{lem:kernel2} to
obtain $p_i$, $i = k+1, \ldots, n$, $v_{ij}$, $q_{ij}$ and $f_{ij}$ as described there,
with $(X,x) = (B,b)$.
Since $\pi$ is a diffeological bundle, we can assume that each $p_i$ for $i > k$ is also
of the form $\pi \circ s_i$.
For each $i$ and $j$, choose a diffeomorphism $\phi_{ij} : q_{ij}^*E \to V_{ij} \times F$,
being sure to choose $\phi_{ij} = \phi_{i'j'}$ when $q_{ij} = q_{i'j'}$ and ensuring that
$g_{ij} \circ \phi_{ij}^{-1} \circ i_2 = \io$, where $g_{ij} : q_{ij}^* E \to E$ is the
natural map (cf.\ \eqref{eq:phi}).
Define $r_{ij} : (U_i, 0) \to (F,e)$ to be $\pr_2 \circ \phi_{ij} \circ (f_{ij}, s_i)$.
Observe that for $q \in \{ q_{ij} \}$
\[
\begin{aligned}
  (\pr_1)_* \bigg( \sum_{\{ i, j \,\mid\, q_{ij} = q \}} (\phi_{ij})_* (f_{ij}, s_i)_* (v_{ij})\bigg)
&= \sum_{\{ i, j \,\mid\, q_{ij} = q \}} (\pr_1)_* (\phi_{ij})_* (f_{ij}, s_i)_* (v_{ij}) \\
&= \sum_{\{ i, j \,\mid\, q_{ij} = q \}} (f_{ij})_* (v_{ij}) \\
&= 0 .
\end{aligned}
\]
Therefore, writing $g_q$ for $g_{ij}$ when $q = q_{ij}$, and similarly for $\phi_q$,
\[
\begin{aligned}
  \io_*\bigg(\sum_{i,j} (r_{ij})_*(v_{ij})\bigg)
&= \, \sum_{i,j} \ (g_{ij})_* (\phi_{ij}^{-1})_* (i_2)_* (\pr_2)_* (\phi_{ij})_* (f_{ij}, s_i)_* (v_{ij}) \\
&= \, \sum_{\mathclap{q \in \{q_{ij}\}}} \ (g_q)_* (\phi_q^{-1})_* (i_2)_* (\pr_2)_* \bigg(\sum_{\{i,j \,\mid\, q_{ij} = q\}} \!\! (\phi_{ij})_* (f_{ij}, s_i)_* (v_{ij}) \bigg)\\
&= \, \sum_{\mathclap{q \in \{q_{ij}\}}} \ (g_q)_* (\phi_q^{-1})_* \bigg(\sum_{\{i,j \,\mid\, q_{ij} = q\}} \!\! (\phi_{ij})_* (f_{ij}, s_i)_* (v_{ij}) \bigg)\\
&= \, \sum_{i,j} \ (g_{ij})_* (f_{ij}, s_i)_* (v_{ij})\\
&= \, \sum_{i=1}^n \ (s_i)_* \bigg( \sum_j v_{ij} \bigg)\\
&= \, \sum_{i=1}^k \ (s_i)_* (u_i) = v ,\\
\end{aligned}
\]
as required.

We next prove that $\pi_*: T_e^H(E) \ra T_b^H(B)$ is a subduction.
Let $p:U \ra T_b^H(B)$ be a plot.
We must show that it locally lifts to a plot of $T_e^H(E)$.
By definition of Hector's diffeology, $p$ is locally either constant
or of the form 
\[
\xymatrix{U \ar[r]^-{(\alpha,\beta)} & V \times \R^n = TV \ar[r]^-{Tq} & T^H B ,}
\]
where $n = \dim(V)$, $(\alpha,\beta)$ is smooth, and $q:V \ra B$ is a plot with $V$ a small ball.
We only need to deal with the second case.
Since $p$ lands in $T^H_b(B)$, $q \circ \alpha$ is the constant map at $b$.
Since $\pi$ is a diffeological bundle, we have the following pullback square in 
$\Diff$, where $h = g \circ \phi^{-1}$ in our earlier notation:
\[
\xymatrix{V \times F \ar[r]^-h \ar[d]_{\pr_1} & E \ar[d]^\pi \\ V \ar[r]_q & B.}
\]
So there exists a map $\gamma:U \ra F$ such that $h \circ (\alpha,\gamma)$ is the constant map 
$e$, and we have the following commutative diagram in $\Diff$:
\[
\xymatrix@C+10pt{U \times F \ar[r]^-{(\alpha,\beta) \times \eta} & TV \times T^H F \ar[r]^-{Th} \ar[d]_{\pr_1} & T^H E \ar[d]^{T \pi} \\
U \ar[r]_{(\alpha,\beta)} \ar[u]^{(1_U,\gamma)} & TV \ar[r]_{Tq} & T^H B,}
\]
where $\eta:F \ra T^H F$ is zero section. Note that the image of the composition 
$Th \circ ((\alpha,\beta) \times \eta) \circ (1_U,\gamma)$ is in $T_e^H(E)$,
and so we have produced the required lift.

The corresponding result for the $dvs$ diffeology then follows from~\cite[Remark~4.7]{CW}.
\end{proof}

As a special case, we have:

\begin{cor}\label{co:group-exact}
Let $G$ be a diffeological group, let $H$ be a subgroup of $G$, and let $e$ be the identity element. 
Then the natural maps $\io: H \hookrightarrow G$ and $\pi:G \twoheadrightarrow G/H$ induce the following 
exact sequence of vector spaces:
\[
\xymatrix{T_e(H) \ar[r]^{\io_*} & T_e(G) \ar[r]^-{\pi_*} & T_{eH}(G/H) \ar[r] & 0.}
\]
Moreover, $T_{eH}^H(G/H) = T_{eH}^{dvs}(G/H)$ and $\pi_*: T_e(G) \ra T_{eH}(G/H)$ is a subduction.
\end{cor}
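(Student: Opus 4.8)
The plan is to read this off Theorem~\ref{th:bundle-exact2} applied to the bundle $\pi\colon G\to G/H$, with one genuinely group-theoretic ingredient supplied by hand. First, by Proposition~\ref{pr:diffgroup-diffbundle} the map $\pi\colon G\to G/H$ is a diffeological bundle with fiber $H$, and the fiber over $eH$ is precisely $H$ with inclusion $\io\colon H\hookrightarrow G$. Applying Theorem~\ref{th:bundle-exact2} with $E=G$, $B=G/H$ and basepoint $e\in G$ (so $b=eH$) immediately yields the exact sequence $T_e(H)\xrightarrow{\io_*}T_e(G)\xrightarrow{\pi_*}T_{eH}(G/H)\to 0$ and the fact that both $\pi_*\colon T^H_e(G)\to T^H_{eH}(G/H)$ and $\pi_*\colon T^{dvs}_e(G)\to T^{dvs}_{eH}(G/H)$ are subductions. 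The final assertion of the corollary is one of these two statements, so everything except the equality $T^H_{eH}(G/H)=T^{dvs}_{eH}(G/H)$ is already in hand.

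Second, I would reduce that equality to the single fiber statement $T^H_e(G)=T^{dvs}_e(G)$. Since Hector's diffeology is contained in the $dvs$ diffeology, the identity $T^H_{eH}(G/H)\to T^{dvs}_{eH}(G/H)$ is automatically smooth, so only the reverse inclusion of diffeologies requires proof. Given a plot $p$ of $T^{dvs}_{eH}(G/H)$, the $dvs$ subduction from Theorem~\ref{th:bundle-exact2} lifts it locally to a plot $\tilde p$ of $T^{dvs}_e(G)$; if $T^{dvs}_e(G)=T^H_e(G)$, then $\tilde p$ is a plot of $T^H_e(G)$, and pushing it down along the smooth map $\pi_*\colon T^H_e(G)\to T^H_{eH}(G/H)$ exhibits $p=\pi_*\tilde p$ as a plot of $T^H_{eH}(G/H)$. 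Hence the two diffeologies on $T_{eH}(G/H)$ coincide once $T^H_e(G)=T^{dvs}_e(G)$ is known, and then the bare map $\pi_*\colon T_e(G)\to T_{eH}(G/H)$ is a subduction for the common diffeology.

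Third, and this is where the group structure enters, I would show $T^H_e(G)=T^{dvs}_e(G)$ by proving that Hector's diffeology on the fiber at $e$ is already closed under the fiberwise vector operations. Recall that the $dvs$ diffeology is the smallest one containing Hector's and closed under fiberwise addition and scalar multiplication, so a plot of $T^{dvs}_e(G)$ is locally a finite combination $u\mapsto\sum_k c_k(u)\,p_k(u)$ of Hector plots $p_k$ sharing a common basepoint (cf.\ \cite[Remark~4.7]{CW}); as the combination lands in the fiber over $e$, that basepoint is $e$, so each $p_k$ is a plot of $T^H_e(G)$. Scalar multiplication sends a Hector plot $u\mapsto (q)_*(\alpha(u),\beta(u))$ to $u\mapsto (q)_*(\alpha(u),c(u)\beta(u))$, again in Hector form, so the real content is closure under addition. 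Here I would use that $m\colon G\times G\to G$ is smooth and that $T_{(e,e)}m\colon T_e(G)\times T_e(G)\to T_e(G)$ is the addition map, since $m(\,\cdot\,,e)=m(e,\,\cdot\,)=\id$ forces $T_{(e,e)}m(v,0)=v$ and $T_{(e,e)}m(0,w)=w$ and $T_{(e,e)}m$ is linear. Concretely, for two Hector plots $p=(q_1)_*(\alpha_1,\beta_1)$ and $q=(q_2)_*(\alpha_2,\beta_2)$ into $T^H_e(G)$, the composite $Q=m\circ(q_1\times q_2)$ is a plot with $Q(\alpha_1(u),\alpha_2(u))=e$, and the chain rule together with the product decomposition $T_{(x_1,x_2)}(X_1\times X_2)\cong T_{x_1}(X_1)\times T_{x_2}(X_2)$ identifies the relevant value of $(Q)_*$ with $p+q$; being visibly in Hector's form, $p+q$ is a Hector plot. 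Thus every fiberwise linear combination of Hector plots at $e$ is a Hector plot, giving $T^{dvs}_e(G)=T^H_e(G)$.

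The main obstacle is exactly this last step: the material of the first two paragraphs is formal bookkeeping around Theorem~\ref{th:bundle-exact2} and the definition of the $dvs$ diffeology, whereas closure of Hector's fiber diffeology under addition is the one place where the group axioms are genuinely used. The technical care required is to combine the two local Hector-form expressions through the single plot $Q$ on the product domain while keeping the basepoint constant at $e$, and to verify via the product decomposition of tangent spaces that the differential of $m$ yields the honest sum $p+q$ rather than some twisted combination.
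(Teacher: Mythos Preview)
Your argument is correct and follows the same overall structure as the paper's proof: apply Theorem~\ref{th:bundle-exact2} to the bundle $G\to G/H$, invoke the equality $T_e^H(G)=T_e^{dvs}(G)$, and then use that the two subductions $\pi_*$ have the same underlying function to deduce $T_{eH}^H(G/H)=T_{eH}^{dvs}(G/H)$. The only difference is that the paper simply cites \cite[Theorem~4.15]{CW} for $T_e^H(G)=T_e^{dvs}(G)$, whereas your third paragraph reproves the fiber version of that result by hand, using the multiplication map $m$ and the identification $T_{(e,e)}m=+$ to exhibit a sum of Hector plots as a single Hector plot through $Q=m\circ(q_1\times q_2)$. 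That inline argument is essentially the standard one behind \cite[Theorem~4.15]{CW}, so you are not taking a genuinely different route, just unpacking a citation; the payoff is a more self-contained proof at the cost of length, while the paper's version is shorter but depends on the external reference.
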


\begin{proof}
As $\pi:G \ra G/H$ is a diffeological bundle with fiber $H$,
most of this follows directly from Theorem~\ref{th:bundle-exact2}.
We just need to prove that $T_{eH}^H(G/H) = T_{eH}^{dvs}(G/H)$.
First note that $T_e^H(G) = T_e^{dvs}(G)$, by~\cite[Theorem~4.15]{CW}.
By Theorem~\ref{th:bundle-exact2}, both $\pi_*: T_e^H(G) \ra T_{eH}^H(G/H)$ and 
$\pi_*: T_e^{dvs}(G) \ra T_{eH}^{dvs}(G/H)$ are subductions.
Since both maps have the same underlying function, 
it follows that $T_{eH}^H(G/H) = T_{eH}^{dvs}(G/H)$.
\end{proof}

In Proposition~\ref{prop:H=dvs-homogeneous}, we will generalize the last statement of the previous corollary.

\begin{ex}\label{ex:irrational-torus}
For $\theta$ an irrational number, the irrational torus $T_{\theta}$ is diffeomorphic
to $\R/(\Z+\theta \Z)$.
It follows immediately from Corollary~\ref{co:group-exact} that $T_{[0]}(T_\alpha)=\R$.
See~\cite[Example~3.23]{CW} for a direct computation.
\end{ex}

We don't know whether the function $\io_*$ is injective in general in
either Theorem~\ref{th:bundle-exact2} or Corollary~\ref{co:group-exact}.
We show in Theorem~\ref{th:i-injective} that it is injective under stronger hypotheses.

\section{Filtered diffeological spaces}\label{s:filtered}

In this section, we introduce filtered and weakly filtered diffeological spaces.
For example, manifolds are filtered, and diffeological groups are weakly filtered.
Our main result is that the inclusion map $\io : F \to E$ of a diffeological
bundle with filtered total space and base space induces an injection on tangent spaces, 
extending the exact sequence from the previous section.
We also show that when $X$ is filtered, Hector's internal tangent bundle $T^H X$
is a diffeological vector space over $X$, $T^H X$ agrees with $T^{dvs} X$,
and each $T_x(X)$ has the fine diffeology (Definition~\ref{de:fine}).

We say that a category $\cC$ is \dfn{weakly filtered} if it is non-empty, 
and for any two objects $C$ and $D$ in $\cC$ there exists an object
$E$ and morphisms $C \to E$ and $D \to E$.
$\cC$ is \dfn{filtered} if it is weakly filtered and
for any two parallel morphisms $f, g : C \to D$ in $\cC$
there exists a morphism $h : D \to E$ such that $h \circ f = h \circ g$.
Equivalently, $\cC$ is filtered if every finite diagram in $\cC$ has a cocone.

\begin{de}\label{de:filtered}
A pointed diffeological space $(X,x)$ is \dfn{(weakly) filtered} if the germ category $\cG(X,x)$ 
introduced in Remark~\ref{rem:germcat} is (weakly) filtered.
A diffeological space $X$ is \dfn{(weakly) filtered} if $(X,x)$ is (weakly) filtered for every 
$x \in X$.
\end{de}

\begin{lem}\label{le:filt} \ 
\begin{enumerate}
\item If $U$ is open in $\R^n$ and $u$ is in $U$, then $(U,u)$ is filtered.
\item If $(X_1, x_1), \ldots, (X_n, x_n)$ is a finite family of pointed diffeological spaces,
then $(\prod_{i=1}^n X_i,\, (x_i))$ is (weakly) filtered if and only if each $(X_i, x_i)$ is.
\item Being (weakly) filtered is local: if $(X,x)$ is a pointed diffeological space, 
and $A$ is a $D$-open neighborhood of $x$ in 
$X$, then $(X,x)$ is (weakly) filtered if and only if $(A,x)$ is.
\item If $(X,x)$ is a (weakly) filtered pointed diffeological space, and $(Y,y)$ is a retract of $(X,x)$ in 
$\Diff_*$, then $(Y,y)$ is also (weakly) filtered.
\end{enumerate}
It follows from (1) and (3) that manifolds are filtered.
\end{lem}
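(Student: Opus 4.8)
The plan is to prove the four parts in the order (1), (4), (2), (3), since parts (2) and (3) are cleanest once (1) and (4) are available, and then to deduce the statement about manifolds. For (1), I would produce a terminal object in $\cG(U,u)$. Translation by $u$ gives a pointed plot $c\colon(W,0)\to(U,u)$, $c(t)=u+t$, where $W$ is a small ball about $0$ contained in $U-u$. For any object $p\colon(V,0)\to(U,u)$, the relation $p=c\circ f=u+f$ forces $f=p-u$, so the germ of $p-u$ is the unique morphism $p\to c$. A category with a terminal object is filtered: the terminal object is a common target for any two objects, and its unique incoming arrows coequalize any parallel pair. Hence $\cG(U,u)$ is filtered.

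For (4), I would use the general categorical fact that a retract of a (weakly) filtered category is again (weakly) filtered. The pointed maps witnessing that $(Y,y)$ is a retract of $(X,x)$ induce, by postcomposition with plots, functors $\cG(Y,y)\to\cG(X,x)$ and $\cG(X,x)\to\cG(Y,y)$ whose composite is the identity on $\cG(Y,y)$. Transporting a common target (respectively a coequalizing arrow) from $\cG(X,x)$ back along the retraction then yields the corresponding data in $\cG(Y,y)$; non-emptiness and the two-object and parallel-arrow conditions all transfer routinely. The ``only if'' direction of (2) is then immediate, since each factor $(X_i,x_i)$ is a retract of $(\prod_i X_i,(x_i))$ in $\Diff_*$ via the basepoint-filling inclusion into slot $i$, split by the projection $\pr_i$.

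The remaining, and most delicate, step is the ``if'' direction of (2). Here I would use that a pointed plot of $\prod_i X_i$ is exactly a tuple $(p_i)$ of pointed plots of the factors sharing a single domain, and that a morphism $(p_i)\to(q_i)$ is a single germ $f$ with $p_i=q_i\circ f$ for every $i$. Given two objects $p=(p_i)$ and $q=(q_i)$, I would apply the weakly filtered hypothesis to each factor separately to obtain $r_i\colon(V_i,0)\to(X_i,x_i)$ together with germs $a_i\colon p_i\to r_i$ and $b_i\colon q_i\to r_i$, and then assemble a single common target $s=(r_i\circ\pr_i)\colon(\prod_i V_i,0)\to(\prod_i X_i,(x_i))$ with morphisms $A=(a_i)$ and $B=(b_i)$; the analogous assembly of factorwise coequalizers handles parallel pairs. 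The main obstacle of the whole lemma is exactly this assembly: the separate filtering data of the factors must be combined into one germ out of a common domain, and it is the product-of-domains construction that makes a single such germ available.

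For (3), I would show that the inclusion $A\hookrightarrow X$ induces an equivalence $\cG(A,x)\to\cG(X,x)$. Fullness and faithfulness are clear, since morphisms are germs of maps between Euclidean domains and the defining relation $p=q\circ f$ is unaffected by the injection $A\hookrightarrow X$. Essential surjectivity uses $D$-openness of $A$: any pointed plot $p\colon(U,0)\to(X,x)$ satisfies $0\in p^{-1}(A)$, which is open in $U$, so a smaller neighborhood of $0$ maps into $A$, and restricting the domain is an isomorphism in the germ category. As being (weakly) filtered is invariant under equivalence of categories, (3) follows. Finally, for manifolds one combines (1) and (3): a chart about $x$ is an open, hence $D$-open, neighborhood diffeomorphic to an open subset of some $\R^n$, which is filtered by (1), so $(M,x)$ is filtered by (3).
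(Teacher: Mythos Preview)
Your proof is correct and matches the paper's approach: the paper simply notes that $\cG(U,u)$ has a terminal object for (1) and declares the rest straightforward, and you have filled in those straightforward details accurately. Your reordering to deduce the ``only if'' direction of (2) from (4) is a clean touch but not a departure from the intended argument.
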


\begin{proof}
(1) follows from the fact that the germ category $\cG(U,u)$ has a terminal object, and
the rest are straightforward.
\end{proof}

The next proposition allows us to determine another class of filtered diffeological spaces.
We say that a pointed map $\pi : (E,e) \to (B,b)$ has the 
\dfn{unique germ lifting property} if each germ $g : (U,0) \to (B,b)$
lifts to a unique germ $f : (U, 0) \to (E,e)$ with $g = \pi \circ f$ as germs at $0$.
This is the case when $\pi$ is a diffeological bundle with discrete fiber.

\begin{prop}
If $(E,e)$ is filtered and $\pi : (E,e) \to (B,b)$ has the
unique germ lifting property, then $(B,b)$ is filtered.
\end{prop}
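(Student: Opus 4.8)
The plan is to show that the unique germ lifting property induces an equivalence of germ categories between $\cG(B,b)$ and $\cG(E,e)$; since being filtered is invariant under equivalence of categories, the result then follows immediately from the hypothesis that $(E,e)$ is filtered. The entire argument rests on the \emph{uniqueness} half of the lifting property, which is what makes the lifting operation compatible with the categorical structure.

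First I would construct a lifting functor $L : \cG(B,b) \to \cG(E,e)$. On objects, a pointed plot $p : (U,0) \to (B,b)$ is in particular a germ, so by hypothesis it has a unique lift to a germ $\tilde p : (U,0) \to (E,e)$ with $\pi \circ \tilde p = p$; choosing a smooth representative on a neighborhood of $0$ gives a pointed plot of $E$, which I take to be $L(p)$. On morphisms, given $f : p \to q$ in $\cG(B,b)$, that is, a germ $f : (U,0) \to (V,0)$ with $p = q \circ f$, I claim the same germ $f$ defines a morphism $\tilde p \to \tilde q$. Indeed $\tilde q \circ f$ is a lift of $p$, since $\pi \circ (\tilde q \circ f) = q \circ f = p$, so by uniqueness $\tilde q \circ f = \tilde p$. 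Because $L$ is then the identity on underlying germs, compatibility with composition and identities is automatic.

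Next I would exhibit an inverse functor $P : \cG(E,e) \to \cG(B,b)$ given by post-composition with $\pi$, sending $\sigma : (U,0) \to (E,e)$ to $\pi \circ \sigma$ and again acting as the identity on underlying germs (using $\pi \circ \sigma = \pi \circ \tau \circ h$ to check the morphism condition descends). The uniqueness clause gives $\widetilde{\pi \circ \sigma} = \sigma$, so $L \circ P \cong \id$, while $\pi \circ \tilde p = p$ gives $P \circ L \cong \id$, the natural isomorphisms being the germ-restriction maps that absorb the choice of domains. Hence $L$ and $P$ are inverse equivalences, and $\cG(B,b)$ is filtered because $\cG(E,e)$ is.

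One can also verify the filteredness conditions for $\cG(B,b)$ directly: given objects $p_1, p_2$, lift them, take a cocone of $\tilde p_1, \tilde p_2$ in $\cG(E,e)$, and apply $\pi$ to push it down; given parallel morphisms $f, g : p \to q$, lift to parallel morphisms $\tilde p \rightrightarrows \tilde q$ (uniqueness again forcing $\tilde q \circ f = \tilde p = \tilde q \circ g$), coequalize them in $\cG(E,e)$ by some $h : \tilde q \to \tau$, and set $q' = \pi \circ \tau$ so that $h : q \to q'$ coequalizes $f$ and $g$ in $\cG(B,b)$. In either approach the only delicate point I expect is the bookkeeping of germ representatives and shrinking domains --- specifically, noting that restricting a plot to a smaller neighborhood of $0$ yields an isomorphic object of the germ category, so that lifting is well defined on germs --- but this is routine, and the substance of the argument is entirely the uniqueness of lifts.
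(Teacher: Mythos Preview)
Your proposal is correct. The second approach you sketch --- lift a finite diagram to $\cG(E,e)$ using uniqueness to see that morphisms lift, take a cocone there, and push it down via $\pi$ --- is exactly the paper's proof, which invokes the ``every finite diagram has a cocone'' characterisation of filteredness rather than checking the two axioms separately.

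Your first approach, establishing that $\cG(B,b)$ and $\cG(E,e)$ are equivalent categories, is a cleaner and slightly stronger packaging of the same idea. The key observation in both arguments is identical: uniqueness of germ lifts forces $\tilde q \circ f = \tilde p$ whenever $q \circ f = p$, so morphisms lift for free. The paper uses this only to transport one finite diagram at a time; you promote it to the statement that postcomposition with $\pi$ is fully faithful (and lifting gives essential surjectivity), which is a nice structural observation the paper does not make explicit. The bookkeeping you flag about shrinking domains is indeed routine, since a plot and its restriction to a smaller neighbourhood of $0$ are isomorphic in the germ category via the identity germ.
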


\begin{proof}
Suppose $D : I \to \cG(B,b)$ is a finite diagram.
For each object $i$ of $I$, we have a pointed plot $D_i : (U_i, 0) \to (B,b)$.
By assumption, this lifts to a pointed plot $D'_i : (U'_i, 0) \to (E,e)$
after restriction to an open neighbourhood $U'_i$ of $0$ in $U_i$.
For each morphism $\alpha : i \to j$ of $I$, $D(\alpha) : (U_i, 0) \to (U_j, 0)$
is a germ, and thus defines a germ $D'(\alpha) : (U'_i, 0) \to (U'_j, 0)$.
By uniqueness of germ lifting, $D'(\alpha)$ is a morphism in $\cG(E,e)$,
and so $D'$ is a functor $I \to \cG(E,e)$.
Since $(E,e)$ is filtered, this functor has a cocone $c_i' : D'(i) \to C$.
Then $c_i : D(i) \to \pi \circ C$ is a cocone for $D$, showing that $\cG(B,b)$ is filtered.
\end{proof}

\begin{cor}
If $G$ is a Lie group and $H$ is a subgroup such that the sub-diffeology is discrete,
then $G/H$ is filtered.
\end{cor}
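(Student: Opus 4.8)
The plan is to exhibit $G/H$ as the base of a diffeological bundle with discrete fiber whose total space is filtered, and then to read off the conclusion from the preceding proposition applied one point at a time. The three ingredients I need are already available: that manifolds are filtered, that the coset projection is a diffeological bundle, and that discrete-fiber bundles have the unique germ lifting property.

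Concretely, I would proceed in three steps. First, since $G$ is a Lie group it is in particular a manifold, so by Lemma~\ref{le:filt} (manifolds are filtered) the pointed space $(G,g)$ is filtered for every $g \in G$. Second, by Proposition~\ref{pr:diffgroup-diffbundle} the quotient map $\pi : G \to G/H$ is a diffeological bundle with fiber $H$; by hypothesis this fiber, carrying the sub-diffeology it inherits from $G$, is discrete, so $\pi$ is a diffeological bundle with discrete fiber and therefore has the unique germ lifting property. Third, I would fix an arbitrary coset $b = gH \in G/H$ and the representative $g \in \pi^{-1}(b)$: since $(G,g)$ is filtered and $\pi : (G,g) \to (G/H, b)$ has the unique germ lifting property, the previous proposition yields that $(G/H, b)$ is filtered. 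As $b$ was arbitrary, $G/H$ is filtered.

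I do not expect a serious obstacle here, as the statement is essentially an assembly of the preceding results; the argument is a direct corollary once the bundle is identified. The only point that warrants a moment's care is matching the hypothesis to the discrete-fiber condition, namely observing that the diffeology on the fiber $H$ of $\pi$ is exactly the sub-diffeology from $G$, so that ``discrete sub-diffeology'' is precisely ``discrete fiber,'' and then invoking the asserted unique germ lifting property for such bundles. Everything after that is a citation of the previous proposition.
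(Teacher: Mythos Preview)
Your proposal is correct and is exactly the argument the paper intends: the corollary is stated without proof precisely because it is assembled from the preceding proposition (filtered total space plus unique germ lifting implies filtered base), the remark that diffeological bundles with discrete fiber have the unique germ lifting property, Proposition~\ref{pr:diffgroup-diffbundle} ($G\to G/H$ is a bundle with fiber $H$), and Lemma~\ref{le:filt} (manifolds are filtered). Your care in noting that the fiber diffeology is the sub-diffeology is appropriate, and nothing further is needed.
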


\begin{ex}
For $\theta$ an irrational number, the irrational torus $T_{\theta}$ is diffeomorphic
to $\R/(\Z+\theta \Z)$ and is therefore filtered.
The sub-diffeology on $\Z+\theta \Z$ is discrete, but the sub-topology is not.
\end{ex}

We show in Corollary~\ref{co:fine-filtered} that fine diffeological vector spaces
are also filtered.

\begin{rem}
If $(E,e)$ is weakly filtered and $\pi : (E,e) \to (B,b)$ is a map such
that germs $(U,0) \to (B,b)$ lift to $(E,e)$ (not necessarily uniquely),
then a similar argument shows that $(B,b)$ is weakly filtered.
This applies, for example, when $\pi$ is a subduction such that
$\pi^{-1}(b) = \{ e \}$.
\end{rem}

\begin{ex}
A \dfn{diffeological orbifold} is a diffeological space which is locally
diffeomorphic to a quotient of a Euclidean space by a linear action of a
finite group.  
By the previous remark, orbifolds are weakly filtered.
However, orbifolds are not filtered in general.  For example, $\R/O(1)$ is not filtered.
Indeed, the quotient map $q : (\R, 0) \to (\R/O(1), [0])$ is a plot which
has two endomorphisms, $\pm 1 : (\R, 0) \to (\R, 0)$.
It is not hard to see that the plot $q$ does not factor as a germ at $0$
through another plot $r$ in a way that coequalizes the two endomorphisms.
\end{ex}

The remark below will be used in the proof of the following theorem.

\begin{rem}\label{rem:colimit}
Recall that the internal tangent space $T_x(X)$ is the colimit of the functor 
$\cG(X,x) \ra \Vect$ sending a pointed plot $(U,0) \to (X,x)$ to $T_0(U)$.
Let $T^{set}_x(X)$ be the result of taking the colimit in the category of sets
instead of vector spaces.
It is a standard fact that the forgetful functor from $\Vect$ to $\Set$ preserves filtered colimits,
so when $(X,x)$ is filtered, the natural function $T^{set}_x(X) \to T_x(X)$ is actually a bijection.
In particular, this implies that 
if $\sum_{i=1}^k (p_i)_*(u_i) = 0 \in T_x(X)$, where $p_i: (U_i,0) \ra (X,x)$ are pointed plots and $u_i \in T_0(U_i)$, then there 
exist a pointed plot $q:(V,0) \ra (X,x)$ and germs $f_i: (U_i,0) \ra (V,0)$ such that 
$p_i = q \circ f_i$ as germs at $0$ and $\sum_i (f_i)_*(u_i) = 0 \in T_0(V)$.
Compare this to the more complicated characterization of zero vectors in Lemma~\ref{lem:kernel2}.
\end{rem}

For diffeological bundles with filtered total space and base space, we can extend the exact
sequence of Theorem~\ref{th:bundle-exact2}.

\begin{thm}\label{th:i-injective}
Suppose $\pi : E \to B$ is a diffeological bundle.
Let $e \in E$ and $b = \pi(e)$, and
write $\io : F \to E$ for the inclusion of the fiber over $b$.
If $(E,e)$ and $(B,b)$ are filtered, then
\[
\xymatrix{0 \ar[r] & T_e(F) \ar[r]^{\io_*} & T_e(E) \ar[r]^-{\pi_*} & T_b(B) \ar[r] & 0}
\]
is an exact sequence of vector spaces.  Moreover, both
$\pi_*: T_e^H(E) \ra T_b^H(B)$ and $\pi_*: T_e^{dvs}(E) \ra T_b^{dvs}(B)$ are subductions.
\end{thm}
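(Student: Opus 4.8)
The only statement here that is not already contained in Theorem~\ref{th:bundle-exact2} is the injectivity of $\io_*$: the surjectivity of $\pi_*$, the exactness at $T_e(E)$, and the two subduction claims are quoted verbatim from that theorem. So the plan is to prove that $\io_* : T_e(F) \to T_e(E)$ is injective. Let $w \in T_e(F)$ with $\io_*(w) = 0$, and write $w = \sum_{i=1}^k (r_i)_*(u_i)$ for pointed plots $r_i : (U_i,0) \to (F,e)$ and vectors $u_i \in T_0(U_i)$; since $F$ need not be filtered, I must allow $w$ to be a genuine finite sum.

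First I would transport the relation into $E$. As $\io_*(w) = \sum_i (\io r_i)_*(u_i) = 0$ in $T_e(E)$ and $(E,e)$ is filtered, Remark~\ref{rem:colimit} supplies a single pointed plot $q : (V,0) \to (E,e)$ and germs $f_i : (U_i,0) \to (V,0)$ with $\io r_i = q \circ f_i$ and $\sum_i (f_i)_*(u_i) = 0$ in $T_0(V)$; write $\zeta_i = (f_i)_*(u_i)$. Because each $r_i$ lands in the fiber $F = \pi^{-1}(b)$, the plot $\pi \circ q \circ f_i$ is constant, so $(\pi q)_*(\zeta_i) = 0$ in $T_b(B)$ for \emph{each} $i$ separately. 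Here is where the hypothesis on $B$ enters: applying Remark~\ref{rem:colimit} to each of the finitely many vectors $\zeta_i$ over the single plot $\pi q$ and then combining the resulting factorizations using the filteredness of $\cG(B,b)$, I obtain one germ $h : (V,0) \to (W,0)$ and a plot $\omega : (W,0) \to (B,b)$ with $\omega \circ h = \pi q$ and $h_*(\zeta_i) = 0$ in $T_0(W)$ simultaneously for all $i$.

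Next I would run the bundle argument from the proof of Theorem~\ref{th:bundle-exact2}. Trivializing the pullback of $\pi$ along $\omega$ and normalizing the trivialization as in diagram~\eqref{eq:phi}, I get a smooth map $\rho : V \to F$ with $\rho(0)=e$, built from the canonical lift $(h,q)$ of $q$ into the pullback bundle followed by the second projection of the trivialization, and I set $\tilde r_i := \rho \circ f_i : (U_i,0) \to (F,e)$. Since $\rho$ is defined and smooth on all of $V$, these auxiliary lifts satisfy $\sum_i (\tilde r_i)_*(u_i) = \rho_*\big(\sum_i \zeta_i\big) = \rho_*(0) = 0$ in $T_e(F)$. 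Thus it suffices to show $\sum_i (r_i)_*(u_i) = \sum_i (\tilde r_i)_*(u_i)$ in $T_e(F)$, for then $w=0$. To compare $r_i$ with $\tilde r_i$ I would analyze the fiberwise twisting of the trivialization over $Z = \omega^{-1}(b)$, the set on which the natural map to $E$ takes values in $F$. Working through the diffeomorphism $\omega^*E \cong W \times F$ and using the product formula $T_{(0,e)}(Z \times F) \cong T_0(Z) \oplus T_e(F)$ of~\cite[Proposition~3.7]{CW}, I expect to find a fixed smooth $\xi : Z \to F$ with $\xi(0)=e$ such that $(\tilde r_i)_*(u_i) - (r_i)_*(u_i) = \xi_*\big((h f_i)_*(u_i)\big)$, where $(h f_i)_*(u_i) \in T_0(Z)$ is the base-direction part. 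Summing gives $w = -\,\xi_*\big(\sum_i (h f_i)_*(u_i)\big)$, so the whole problem collapses to showing that this base-direction correction vanishes in $T_e(F)$.

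The hard part will be exactly this last step. The natural maps that recover $r_i$ — the bundle map $\omega^*E \to E$, or $q$ itself — all take values in $E$, so any identity they produce lives in $T_e(E)$; and an identity in $T_e(E)$ is worthless here, since injectivity of $\io_*$ is precisely what is to be proved. Breaking this circularity forces the comparison into the subspace sitting over $Z = \omega^{-1}(b)$, and the price is that one must control the sub-diffeology tangent space $T_0(Z)$, which can a priori be strictly larger than its image in $T_0(W)$. The filteredness of $E$ and of $B$ gives the vanishing of the relevant vectors only in the \emph{ambient} Euclidean tangent spaces $T_0(V)$ and $T_0(W)$ — the latter being the per-index relations $h_*(\zeta_i)=0$ — whereas $\xi_*$ is a map out of the possibly-singular $Z$. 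The crux is therefore to upgrade the ambient vanishing to the vanishing of $\xi_*\big(\sum_i (h f_i)_*(u_i)\big)$ in $T_e(F)$. My intended route is to show that each $(h f_i)_*(u_i)$ already dies in $T_0(Z)$, exploiting that $h$ was produced by killing the $\zeta_i$ \emph{one index at a time} via the filteredness of $B$, rather than only the summed relation $\sum_i \zeta_i = 0$; if that per-index vanishing does not survive the corestriction into $Z$, a more careful choice of the factoring germ $h$ (again using that $\cG(B,b)$ is filtered) will be needed to make $Z$ tangentially well-behaved at $0$.
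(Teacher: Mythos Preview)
There is a genuine gap at exactly the point you flag. Your use of the filteredness of $(B,b)$ is too weak: you invoke Remark~\ref{rem:colimit} only to produce an $h$ with $h_*(\zeta_i)=0$ in $T_0(W)$, and then you are forced to confront the possibly singular subspace $Z=\omega^{-1}(b)$, whose tangent space at $0$ need not inject into $T_0(W)$. There is no reason the vanishing of $(hf_i)_*(u_i)$ in the ambient $T_0(W)$ should force its corestriction to vanish in $T_0(Z)$, and you have no mechanism for choosing $h$ that would make $Z$ tangentially well-behaved. Your proposed fix---kill the $\zeta_i$ one at a time---does not help, because Remark~\ref{rem:colimit} again only controls images in $T_0(W)$, not in $T_0(Z)$.

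The paper sidesteps this entirely by using the \emph{coequalizer} clause in the definition of filtered, rather than its tangent-level consequence. Each $f_i:(U_i,0)\to(V,0)$ and the zero germ $0:(U_i,0)\to(V,0)$ are both morphisms in $\cG(B,b)$ from the constant plot $\pi\circ\io\circ r_i$ to $\pi\circ q$. Since $\cG(B,b)$ is filtered, the finite diagram consisting of these parallel pairs has a cocone: a germ $g:(V,0)\to(W,0)$ and a pointed plot $r:(W,0)\to(B,b)$ with $r\circ g=\pi\circ q$ and $g\circ f_i$ equal to the \emph{constant} germ for every $i$. Now trivialize over $W$ and let $q':V\to W\times F$ be the lift of $(g,q)$ and $s=\pr_2\circ q'$. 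Because $g\circ f_i$ is constant as a germ (not merely with vanishing derivative), $q'\circ f_i$ lands in $\{0\}\times F$; the universal property of the pullback then gives $q'\circ f_i=i_2\circ r_i$, hence $s\circ f_i=r_i$. Thus each $f_i$ is already a morphism $r_i\to s$ in $\cG(F,e)$, and the single relation $\sum_i (f_i)_*(u_i)=0$ in $T_0(V)$ yields $w=\sum_i (r_i)_*(u_i)=s_*\!\big(\sum_i (f_i)_*(u_i)\big)=0$ in $T_e(F)$. No auxiliary $\tilde r_i$, no twisting $\xi$, and no subspace $Z$ enter the argument.
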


Note that if $A$ is a subset of $X$ with the sub-diffeology, the
inclusion does not in general induce an injection on tangent
spaces~\cite[Example~3.20]{CW}.

\begin{proof}
\newcommand{\tp}{p}
We only need to show that $\io_*$ is injective.
Let $v \in T_e(F)$ be represented as $v = \sum_{i=1}^k (\tp_i)_*(u_i)$ for
pointed plots $\tp_i : (U_i, 0) \to (F, e)$.
Suppose $\io_*(v) = 0$.
Then, since $(E,e)$ is filtered, there are commuting diagrams
\[
  \xymatrix{
    (U_i,0) \ar[r]^{\tp_i} \ar[d]_{f_i} & (F,e) \ar[d]^\io \\
    (V,0) \ar[r]_{q}           & (E,e), 
  }
\]
where $q$ is a pointed plot, each $f_i$ is a germ, and $\sum_i (f_i)_*(u_i) = 0$.
The germ $f_{i}$ can be thought of as a morphism between the pointed plots $\pi \circ \io \circ \tp_i$ and $\pi \circ q$ in $\cG(B,b)$.
The former is constant, and so the $0$ germ is also a morphism between these pointed plots.
Therefore, since $(B,b)$ is filtered, we can extend the diagram to
\[
  \xymatrix{
    (U_i,0) \ar[r]^{\tp_i} \ar[d]_{f_{i}} & (F,e) \ar[d]^\io \\
    (V,0) \ar[r]_{q} \ar[d]_{g}  & (E,e) \ar[d]^{\pi} \\
    (W,0) \ar[r]_r           & (B,b) ,
  }
\]
where $g$ is a germ with $g \circ f_{i}$ constant for each $i$, and $r$ is a pointed plot.
Without loss of generality, assume $W$ is a ball.
Then, since $\pi$ is a diffeological bundle, taking the pullback of $\pi$ along $r$
gives a trivial bundle, and we get
\[
  \xymatrix{
    U_i \ar[r]^{\tp_i} \ar[d]_{f_{i}}  & F \ar@{=}[r] \ar[d]_-{i_2} & F \ar[d]^\io \\
    V \ar[r]^-{q'}  \ar[dr]_{g} & W \times F \ar[d]_(0.4){\pr_1} \ar[r]^-{\alpha} & E \ar[d]^{\pi} \\
                          & W \ar[r]^r           & B ,
  }
\]
where $i_2(x) = (0,x)$ for $x \in F$,
and we have suppressed basepoints.
(As in the proof of Theorem~\ref{th:bundle-exact2}, we may need to adjust
the map $\alpha$ to make the upper-right square commute.
We may also need to shrink $V$ and the $U_i$, since $f_i$ is a germ.)
The map $q'$ is induced by the maps $g : V \to W$ and $q : V \to E$,
and the top-left square commutes by the universal property of the pullback.
Let $s : V \to F$ be the composite $\pr_2 \circ q'$.
Since $g \circ f_{i}$ is constant, $q' \circ f_{i}$ lands in $\{ 0 \} \times F$.
Thus $i_2 \circ s \circ f_{i} = i_2 \circ \pr_2 \circ q' \circ f_{i} = q' \circ f_{i} = i_2 \circ \tp_i$.
Since $i_2$ is injective, $s \circ f_{i} = \tp_i$.
Thus the germs $f_i$ are morphisms in $\cG(F,e)$ and the relation
$\sum_i (f_i)_*(u_i) = 0$ shows that the original vector $v$ is zero in $T_e(F)$.
\end{proof}

We don't know whether $\io_*$ is an induction in general.

\begin{comment}
%
\begin{ex}
Let $M$ be a connected manifold and choose $x \in M$.
Consider the diffeological group $\Diff(M)$ of diffeomorphisms $M \to M$,
and the subgroup $\stab(M,x) = \{f \in \Diff(M) \mid f(x)=x \}$.
Then the homogeneous diffeological space $\Diff(M)/\stab(M,x)$ is diffeomorphic to $M$ (\cite{Do}),
and so we have a diffeological bundle $\Diff(M) \to M$ with fiber $\stab(M,x)$ and
filtered base.
By Theorem~\ref{th:i-injective}, we get a short exact sequence
\[
  0 \lra T_{1_M}(\stab(M,x)) \lra T_{1_M}(\Diff(M)) \lra T_x(M) \lra 0.
\]
If $M$ is compact, then $T_{1_M}(\Diff(M))$ is isomorphic to the vector space of
all smooth vector fields on $M$~\cite[Corollary~4.29]{CW},
and the map $T_{1_M}(\Diff(M)) \lra T_x(M)$ is evaluation of a vector field at the point $x$.
It follows that $T_{1_M}(\stab(M,x))$ is isomorphic to the vector space of
all smooth vector fields on $M$ that vanish at $x$.
%
\end{ex}
\end{comment}

\medskip

Before getting to our result about tangent bundles, we make some observations
about weakly filtered diffeological spaces.

\begin{prop}\label{prop:dgrp}
Every diffeological group is weakly filtered.
\end{prop}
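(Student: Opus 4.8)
The plan is to show that for each $g \in G$ the germ category $\cG(G,g)$ is weakly filtered. It is nonempty, since it contains the constant plot at $g$, so the real content is to merge any two pointed plots into a common target object.

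First I would reduce to the case where $g = e$ is the identity element. Left translation $L_{g^{-1}} : G \to G$, $x \mapsto g^{-1} x$, is a diffeomorphism (its inverse is $L_g$, and both are smooth because the multiplication of $G$ is smooth) carrying $g$ to $e$. Hence it induces an isomorphism of pointed diffeological spaces $(G,g) \cong (G,e)$, and therefore an isomorphism of germ categories $\cG(G,g) \cong \cG(G,e)$, so it suffices to treat $(G,e)$.

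Given two objects, i.e.\ pointed plots $p : (U,0) \to (G,e)$ and $q : (V,0) \to (G,e)$, I would build a common target on the product domain $U \times V$ pointed at $(0,0)$. Define $r : (U \times V, (0,0)) \to (G,e)$ by $r(u,v) = p(u)\, q(v)$; this is smooth because multiplication $G \times G \to G$ is smooth and $p,q$ are plots, and $r(0,0) = e \cdot e = e$. The coordinate inclusions $f : (U,0) \to (U\times V,(0,0))$, $u \mapsto (u,0)$, and $h : (V,0) \to (U \times V,(0,0))$, $v \mapsto (0,v)$, then satisfy $r \circ f = p$ and $r \circ h = q$, using $q(0)=e$ and $p(0)=e$ to absorb the trivial factor. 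Thus $f : p \to r$ and $h : q \to r$ are morphisms in $\cG(G,e)$, which gives the cocone on objects required for the weakly filtered condition.

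The argument is short precisely because the group multiplication lets one superpose two plots, so there is no serious obstacle here. I would emphasize, however, why this does \emph{not} upgrade to ``filtered'': filteredness would additionally require coequalizing any pair of parallel germs, and the group structure supplies no mechanism for doing so (compare the non-filtered orbifold $\R/O(1)$ discussed above). This is exactly where the stronger statement fails, which is why the proposition claims only the weak version.
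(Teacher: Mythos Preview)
Your proof is correct and essentially identical to the paper's: reduce to the identity via translation, then define $r(u,v)=p(u)q(v)$ on $U\times V$ with the coordinate inclusions supplying the morphisms to $r$. The paper omits your explicit nonemptiness remark and your closing commentary, but the argument is the same; one small quibble is that $\R/O(1)$ is not a diffeological group, so the relevant example showing diffeological groups need not be filtered is rather $\prod_\omega \R$ (Example~\ref{ex:prodR}).
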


\begin{proof}
By using the left or right multiplication map, it is clear that $(G,e)$ is isomorphic to $(G,g)$ in 
$\Diff_*$ for any $g \in G$, where $e$ denotes the identity element of $G$.
So it is enough to show that $(G,e)$ is weakly filtered.
For any pointed plots $p:(U,0) \ra (G,e)$ and $q:(V,0) \ra (G,e)$, define 
$r:U \times V \ra G$ by $r(u,v)=p(u)q(v)$, $f:U \ra U \times V$ by $f(u)=(u,0)$, 
and $g:V \ra U \times V$ by $g(v)=(0,v)$.
Then these functions are all smooth and they satisfy $r(0,0) = e$, $p = r \circ f$, 
and $q = r \circ g$.
\end{proof}

We will see in Example~\ref{ex:prodR} that diffeological groups are not in general filtered.

It follows from Proposition~\ref{prop:dgrp} that any homogeneous space $G/H$ is weakly filtered,
as is the function space $C^\infty(X,N)$, where 
$X$ is any diffeological space with compact $D$-topology, and $N$ is a manifold.

\medskip

Recall that for an arbitrary pointed diffeological space $(X,x)$, 
not every internal tangent vector $v \in T_x(X)$ can be written as $p_*(u)$ 
for some pointed plot $p:(U,0) \ra (X,x)$ and $u \in T_0(U)$.
When this is possible for every $v$, we say that $T_x(X)$ is \dfn{$1$-representable}.
See~\cite[Remark~3.8]{CW} for more background.

Here is a key property of weakly filtered pointed diffeological spaces:

\begin{prop}\label{pr:1-rep}
Let $(X,x)$ be weakly filtered. Then the internal tangent space $T_x(X)$ 
is $1$-representable.
\end{prop}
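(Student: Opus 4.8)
The plan is to use the description of $T_x(X)$ as a colimit of vector spaces over $\cG(X,x)$, together with the fact that weak filteredness lets us funnel any finite collection of plots through a single plot.

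First I would recall that every element $v \in T_x(X)$ can be written as a finite sum $v = \sum_{i=1}^k (p_i)_*(u_i)$, where each $p_i : (U_i, 0) \to (X,x)$ is a pointed plot and $u_i \in T_0(U_i)$. This is immediate from the description of the colimit in $\Vect$ as a quotient $F/R$ with $F = \oplus_r T_0(U_r)$ (as used in the proof of Lemma~\ref{lem:kernel2}): every element of $F$ is a finite sum of elements of the summands, and the projection $F \to F/R = T_x(X)$ is surjective.

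Next I would upgrade the pairwise condition in the definition of weakly filtered to a statement about finite families: for any finite collection of objects $p_1, \ldots, p_k$ of $\cG(X,x)$, there is a single object $q : (V,0) \to (X,x)$ together with morphisms $f_i : p_i \to q$ in $\cG(X,x)$, that is, germs $f_i : (U_i, 0) \to (V,0)$ with $p_i = q \circ f_i$ as germs at $0$. This follows from the pairwise cocone property by an easy induction on $k$, using non-emptiness of $\cG(X,x)$ for the base case. With this common plot $q$ in hand, the conclusion is a short computation: since $p_i = q \circ f_i$ as germs, functoriality of $T_0$ gives $(p_i)_*(u_i) = q_*\big((f_i)_*(u_i)\big)$ for each $i$, and therefore
\[
  v = \sum_{i=1}^k q_*\big((f_i)_*(u_i)\big) = q_*\Big(\sum_{i=1}^k (f_i)_*(u_i)\Big) = q_*(u),
\]
where $u := \sum_{i=1}^k (f_i)_*(u_i) \in T_0(V)$. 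Thus $v$ is represented by the single pointed plot $q$ and the single vector $u$, which is exactly $1$-representability.

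I do not expect a serious obstacle here. The only point requiring a little care is the passage from the pairwise cocone property to the finite-family version, and the observation that germs (rather than honest maps) suffice throughout; but since $T_0$ depends only on germs at $0$, any shrinking of domains needed to compose the $f_i$ causes no difficulty. It is worth emphasizing that only the object part of the filteredness axiom is used, which is why the weaker hypothesis suffices for this statement.
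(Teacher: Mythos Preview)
Your proof is correct and follows essentially the same argument as the paper: write an arbitrary tangent vector as a finite sum $\sum_i (p_i)_*(u_i)$, use weak filteredness and induction to funnel all the $p_i$ through a single pointed plot $q$ via germs $f_i$, and conclude that $v = q_*\big(\sum_i (f_i)_*(u_i)\big)$. Your additional remarks on why finite sums suffice and why germs cause no trouble are fine elaborations, but the core steps coincide with the paper's proof.
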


\begin{proof}
Let $\sum_{i=1}^n (p_i)_*(u_i) \in T_x(X)$ be any internal tangent vector, 
where $p_i:(U_i,0) \ra (X,x)$ is a pointed plot and $u_i \in T_0(U_i)$ for each $i$. 
By induction on $n$ using the definition of weakly filtered, there exists a pointed plot 
$q:(U,0) \ra (X,x)$ together with germs $f_i:(U_i,0) \ra (U,0)$ such that 
$p_i=q \circ f_i$ as germs at $0$. 
Then $\sum_{i=1}^n (p_i)_*(u_i)=q_*(\sum_{i=1}^n (f_i)_*(u_i))$, 
and hence $T_x(X)$ is $1$-representable.
\end{proof}

It follows that not every diffeological space is weakly filtered.
For example, if $X$ is the union of the axes in $\R^2$ with the sub-diffeology,
and $x$ is the origin, then $(X,x)$ is not weakly filtered,
since $T_x(X)$ is not $1$-representable (see~\cite[Example~3.19]{CW}).

\medskip

When $X$ is weakly filtered, each $T_x(X)$ is $1$-representable, and so
by~\cite[Remark~4.4]{CW}, scalar 
multiplication $\R \times T^H X \ra T^H X$ is smooth. 
When $X$ is filtered, addition is also smooth:

\begin{thm}\label{th:H=dvs}
If $X$ is filtered, then $T^H X$ is a diffeological vector space over $X$.
Therefore, $T^H X$ and $T^{dvs} X$ agree.
\end{thm}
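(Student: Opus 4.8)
The plan is to reduce everything to the smoothness of addition, since scalar multiplication $\R \times T^H X \ra T^H X$ is already known to be smooth when $X$ is weakly filtered (and filtered implies weakly filtered), while the second assertion is immediate: once $T^H X$ is a diffeological vector space over $X$, Hector's diffeology already has the defining property of $T^{dvs} X$, so the two coincide. To prove that addition $T^H X \times_X T^H X \ra T^H X$ is smooth, I would take a plot $(P_1, P_2) : U \ra T^H X \times_X T^H X$ and show that $P_1 + P_2$ is a plot of $T^H X$. As this is local, I would fix $u_0 \in U$ and work on a small ball around it. Using the local description of Hector's diffeology together with $1$-representability (Proposition~\ref{pr:1-rep}), which absorbs the otherwise separate constant case, I would write each $P_i$ locally as $Tq_i \circ (\alpha_i, \beta_i)$ for a plot $q_i : V_i \ra X$ and a smooth map $(\alpha_i, \beta_i) : U \ra TV_i$. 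The fibre-product condition $\pi \circ P_1 = \pi \circ P_2$ then reads $q_1 \circ \alpha_1 = q_2 \circ \alpha_2 =: c$, a plot of $X$ with $c(u_0) = x_0$.

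Next I would invoke the weakly filtered property of $(X, x_0)$ to merge $q_1$ and $q_2$ into a single plot: there is a pointed plot $q : (V,0) \ra (X, x_0)$ and germs $h_i$ with $q \circ h_i = q_i$. Setting $\gamma_i := h_i \circ \alpha_i$ and $\delta_i := (h_i)_* \beta_i$, this rewrites each component as $P_i = Tq \circ (\gamma_i, \delta_i)$, now using the \emph{single} plot $q$ for both, with $q \circ \gamma_1 = q \circ \gamma_2 = c$. The obstruction at this stage—and the very reason addition can fail to be smooth in general—is that the two base-point maps $\gamma_1$ and $\gamma_2$ into $V$ need not agree, so the vectors $q_*(\delta_1(u))$ and $q_*(\delta_2(u))$ sit over the same point $c(u)$ of $X$ but are pushed forward from different points of $V$, and hence cannot yet be added inside a single $T_\bullet V$.

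The key step, where the full strength of filteredness is used, removes this mismatch. After translating $u_0$ to $0$, the maps $\gamma_1, \gamma_2 : (U', 0) \ra (V, 0)$ are precisely two \emph{parallel} morphisms $c \ra q$ in the germ category $\cG(X, x_0)$, since $\gamma_i(0) = 0$ and $q \circ \gamma_i = c$ as germs. Because $(X, x_0)$ is filtered, this parallel pair can be coequalized: there is a morphism $k : q \ra \bar q$ in $\cG(X, x_0)$, consisting of a plot $\bar q : (\bar V, 0) \ra (X, x_0)$ and a germ $k : (V, 0) \ra (\bar V, 0)$ with $\bar q \circ k = q$, such that $k \circ \gamma_1 = k \circ \gamma_2 =: \bar\gamma$ as germs. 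Now both $P_i$ are pushed forward under $\bar q$ from vectors $\bar\delta_i := k_* \delta_i$ lying in the \emph{same} tangent space $T_{\bar\gamma(u)}(\bar V)$, so by linearity of $\bar q_*$ at the fixed base point $\bar\gamma(u)$ one gets $P_1 + P_2 = T\bar q \circ (\bar\gamma, \bar\delta_1 + \bar\delta_2)$ on a neighbourhood of $u_0$. Since $(\bar\gamma, \bar\delta_1 + \bar\delta_2) : U'' \ra T\bar V$ is smooth, this exhibits $P_1 + P_2$ locally as a plot of $T^H X$, as required.

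I expect the main obstacle to be exactly the recognition that $\gamma_1$ and $\gamma_2$ form a parallel pair in $\cG(X, x_0)$ and that the coequalizing clause of the filtered hypothesis is what is needed to align their base points into one tangent space; everything else—reducing to the standard form of Hector plots, disposing of the constant corner cases via $1$-representability, and checking that the rewritten data $\gamma_i$, $\delta_i$, $\bar\gamma$, $\bar\delta_i$ depend smoothly on $u$—is routine bookkeeping that I would not spell out in full.
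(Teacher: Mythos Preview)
Your proposal is correct and follows essentially the same route as the paper's proof: reduce to smoothness of addition, write each component locally as $Tq_i \circ f_i$ (with the constant case absorbed via $1$-representability), use the weakly filtered property to merge $q_1,q_2$ into a single $q$, and then apply the coequalizing clause of filteredness to the parallel pair $\gamma_1,\gamma_2$ in $\cG(X,x_0)$ to obtain a further plot through which both base maps agree, so that the sum can be formed inside a single $T\bar V$. Your notation differs slightly (you decompose $f_i$ explicitly as $(\alpha_i,\beta_i)$ while the paper writes $g_i = \pi_V \circ f_i$), but the argument is the same.
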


\begin{proof}
Since scalar multiplication is smooth when $X$ is weakly filtered, it suffices to show that
addition $T^H X \times_X T^H X \to T^H X$ is smooth.
That is, if $p_1$ and $p_2$ are plots $U \to T^H X$ such that $\pi \circ p_1 = \pi \circ p_2$,
then $p_1 + p_2 : U \to T^H X$ is smooth. 
As smoothness is a local property, it is enough to show that for each $u$ in $U$,
$p_1 + p_2$ is smooth in a neighbourhood of $u$ in $U$.
By translating $U$, one can assume that $u = 0$, so from now on, we work locally around $0 \in U$
and choose $x = \pi(p_i(0)) \in X$ as our basepoint.
By definition, each $p_i$ is locally of the form $T q_i \circ f_i$, where
$f_i$ has codomain $T V_i$ and $q_i : V_i \to X$ is a plot.
(In general, one needs constant plots as well, but since each $T_x(X)$ is $1$-representable,
plots of the above form suffice.)
Since we are working locally, we can assume that each $p_i$ equals $T q_i \circ f_i$,
giving the following diagram:
\[
  \xymatrix{
                   & TV_i \ar[r]^-{T q_i} \ar[d]^{\pi_i} & T^H X \ar[d]^{\pi} \\
    U \ar[ur]^{f_i} & V_i \ar[r]^{q_i}            & X .
  }
\]
By translating $V_1$ and $V_2$ if necessary, we can assume that $\pi_i(f_i(0)) = 0$ for each $i$
and therefore that $q_1$ and $q_2$ are pointed plots.
Because $X$ is weakly filtered, by shrinking $U$, $V_1$ and $V_2$ if necessary, 
we see that $q_1$ and $q_2$ factor through a further pointed plot $q$,
so we can assume that $V_1 = V_2 = V$ and $q_1 = q_2 = q$.  So we have
\[
  \xymatrix{
                   & TV \ar[r]^-{T q} \ar[d]^{\pi_V} & T^H X \ar[d]^{\pi} \\
    U \ar@/^6pt/[ur]^{f_1} \ar@/_0pt/[ur]_(0.6){\!\!f_2}  \ar@/^0pt/[r]^{g_1} \ar@/_6pt/[r]_{g_2} 
                   & V \ar[r]^{q}            & X ,
  }
\]
where $g_i := \pi_V \circ f_i$.
From $\pi \circ p_1 = \pi \circ p_2$, we deduce that $q \circ g_1 = q \circ g_2$.
Since $X$ is filtered, by shrinking $U$ and $V$ if necessary, 
there exist a plot $r : W \to X$ and a map $h : V \to W$
such that $h \circ g_1 = h \circ g_2$ and $r \circ h = q$:
\[
  \xymatrix{
                   & TV \ar[r]^-{T h} \ar[d]^{\pi_V} & TW \ar[r]^-{T r} \ar[d]^{\pi_W} & T^H X \ar[d]^{\pi} \\
    U \ar@/^6pt/[ur]^{f_1} \ar@/_0pt/[ur]_(0.6){\!\!f_2}  \ar@/^0pt/[r]^{g_1} \ar@/_6pt/[r]_{g_2} 
                   & V \ar[r]^{h}            & W \ar[r]^{r}            & X .
  }
\]
Now $\pi_W \circ T h \circ f_1 = \pi_W \circ T h \circ f_2$,
so it makes sense to form $T h \circ f_1 + T h \circ f_2$.
This is smooth, and therefore so is
$p_1 + p_2 = T r \circ (T h \circ f_1 + T h \circ f_2)$.

$T^{dvs} X$ is defined to be the completion of $T^H X$ to a diffeological vector space over $X$.
It follows that $T^H X$ and $T^{dvs} X$ agree.
\end{proof}

While diffeological groups are weakly filtered, we will see in Example~\ref{ex:prodR} that they are not always filtered.
Nevertheless, in~\cite[Theorem~4.15]{CW} we used the group multiplication to show that for every diffeological group $G$, 
$T^H G = T^{dvs} G$. 
Now we extend this result to any homogeneous space:

\begin{prop}\label{prop:H=dvs-homogeneous}
For every homogeneous space $X$, we have $T^H X = T^{dvs} X$.
\end{prop}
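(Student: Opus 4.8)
The plan is to realize $X$ as a quotient $G/H$ of a diffeological group $G$ by a subgroup $H$, and then to compare the two diffeologies on $TX$ by pushing them down from $TG$ along one and the same map. The starting observation is that $T^H G = T^{dvs} G$ as diffeological spaces, by~\cite[Theorem~4.15]{CW}. Since these two diffeologies on the set $TG$ coincide, it suffices to exhibit the function $T\pi : TG \to TX$, namely the disjoint union of the linear maps $\pi_* : T_g(G) \to T_{\pi(g)}(X)$, as a subduction for both the Hector diffeology and the $dvs$ diffeology. Indeed, if both $T\pi : T^H G \to T^H X$ and $T\pi : T^{dvs} G \to T^{dvs} X$ are subductions, then each of $T^H X$ and $T^{dvs} X$ consists exactly of those functions into $TX$ that locally lift along the common function $T\pi$ to plots of the common diffeological space $TG$, and hence the two diffeologies on $TX$ agree.

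The main work is therefore to upgrade the fiberwise subduction statements of Theorem~\ref{th:bundle-exact2} to statements about the whole bundles. By Proposition~\ref{pr:diffgroup-diffbundle}, $\pi : G \to X = G/H$ is a diffeological bundle, and I would prove that $T\pi : T^H G \to T^H X$ is a subduction by the local lifting argument already used in the proof of Theorem~\ref{th:bundle-exact2}, but now \emph{without} constraining the lift to lie over a single point. Concretely, given a plot $P : U \to T^H X$, I would work locally around a point $u_0$ of $U$. By the description of Hector's diffeology, $P$ is locally constant or locally of the form $Tq \circ f$ for a plot $q : V \to X$ and a smooth map $f : U \to TV$. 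In the second case the bundle property lets me lift $q$ through $\pi$ to a plot $\tilde q : V' \to G$ on a neighborhood $V'$ of the relevant point, after which $T\tilde q \circ f$ (restricted to the neighborhood $f^{-1}(TV')$ of $u_0$) is a plot of $T^H G$ lifting $P$; in the constant case I would lift the value along the fiberwise surjection $\pi_*$ furnished by Theorem~\ref{th:bundle-exact2} and take the corresponding constant plot of $T^H G$.

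For the $dvs$ diffeology I would deduce that $T\pi : T^{dvs} G \to T^{dvs} X$ is a subduction from the Hector statement together with~\cite[Remark~4.7]{CW}, exactly as in the final line of the proof of Theorem~\ref{th:bundle-exact2}. Combining the two subductions with the equality $T^H G = T^{dvs} G$ then yields $T^H X = T^{dvs} X$, as the reduction to the group case is now purely formal: subductions out of a fixed diffeological space determine the target diffeology uniquely.

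I expect the main obstacle to be the bookkeeping in the whole-bundle subduction for the Hector diffeology: one must check that lifting the plot $q : V \to X$ to $G$ (rather than lifting while pinning the base point, as was done in Theorem~\ref{th:bundle-exact2}) still produces a genuine plot of $T^H G$, and that the domains of $f$ and of the lift $\tilde q$ match after the necessary shrinking of $U$ and $V$. Once this local lifting is secured and its $dvs$ counterpart is invoked, the conclusion follows immediately.
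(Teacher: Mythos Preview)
Your overall strategy—exhibit both diffeologies on $TX$ as quotients of the common space $T^H G = T^{dvs} G$ along the same function $T\pi$—is sound, and your step~2 (the whole-bundle Hector subduction $T\pi : T^H G \to T^H X$) is correct; the paper itself observes and uses exactly this fact.

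The gap is in step~3. In Theorem~\ref{th:bundle-exact2} the appeal to \cite[Remark~4.7]{CW} concerned the \emph{fiberwise} subductions $T_e^H(E) \to T_b^H(B)$, and there is no reason to expect the passage ``Hector subduction $\Rightarrow$ dvs subduction'' to hold for the whole bundle of an arbitrary diffeological bundle. Concretely: a plot of $T^{dvs} X$ is locally a fiberwise linear combination $\sum_i a_i\, p_i$ of Hector plots $p_i : U \to T^H X$ lying over a common base $\beta : U \to X$. By your step~2 you can lift each $p_i$ to a plot $q_i : U \to T^H G$, but the bases $\pi_G \circ q_i : U \to G$ need not agree, so $\sum_i a_i\, q_i$ does not make sense in $T^{dvs} G$. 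For a general bundle there is no way to repair this; for $G \to G/H$ one can, by right-multiplying $q_i$ (via the zero section and the tangent of the group multiplication) by the $H$-valued map $u \mapsto (\pi_G q_i(u))^{-1}\,(\pi_G q_1(u))$ so as to force all lifts over $\pi_G \circ q_1$. But this group-multiplication adjustment is precisely the crux of the paper's proof, which the paper phrases as lifting a plot of $T^H X \times_X T^H X$ to $TG \times_G TG$ in order to show that addition on $T^H X$ is smooth. So your plan is not wrong, but the one genuinely nontrivial idea has been hidden behind an over-optimistic citation; once you supply it, you are essentially carrying out the paper's argument in slightly different packaging.
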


Note that this proposition generalizes the last statement of Corollary~\ref{co:group-exact}.

\begin{proof}
Again, it is enough to show that addition $T^H X \times_X T^H X \ra T^H X$ is smooth. 
Assume that $X$ is diffeomorphic to $G/H$, where $G$ is a diffeological group and $H$ is a subgroup,
and let $\pi:G \ra X$ be the quotient map.
It is straightforward to check that we have a commutative square
\[
\xymatrix{T G \times_G T G     \ar[r] \ar[d] & T G \ar[d] \\
          T^H X \times_X T^H X \ar[r]        & T^H X,}
\]
where both horizontal functions are vector addition, and both vertical maps are induced by $\pi$.
By~\cite[Theorem~4.15]{CW}, the top function is smooth, so it suffices
to show that every plot $p:U \ra T^H X \times_X T^H X$ locally lifts to a plot 
$q:U \ra T G \times_G T G$.
Recall that the plot $p$ is equivalent to plots $p_1,p_2:U \ra T^H X$
such that 
\begin{equation}\label{eq:1}
\pi_X \circ p_1 = \pi_X \circ p_2,
\end{equation} 
where $\pi_X:T^H X \ra X$ is the natural projection.
Observe that any plot of $T^H X$ locally lifts to a plot of $T G$, since
the plots of $T^H X$ are generated by $T r$ for $r$ a plot of $X$, and the plots
of $X$ locally lift to $G$ since $X$ has the quotient diffeology.
Thus, for any $u \in U$, there exists an open neighbourhood $V$ of $u$ in $U$
such that $p_i|_V$ lifts to a plot $q_i : V \to TG$ for each $i$.
Consider the map $g:V \ra G$ sending $v$ to $(\pi_G (q_2(v)))^{-1} \cdot (\pi_G (q_1 (v)))$, 
where $\pi_G: TG \ra G$ is the projection.
By Equation~\eqref{eq:1}, $g(v)$ is in $H$ for $v \in V$.
Let $q'_2$ be the composite
\[
\xymatrix{V \ar[r]^-{(q_2,g)} & TG \times G \ar[r]^-{1 \times \sigma} & TG \times TG \ar[r] & TG,}
\]
where $\sigma:G \ra TG$ is the zero section, and the last map is induced by the multiplication of $G$.
It is easy to see that $\pi_G \circ q_1 = \pi_G \circ q'_2$, 
and that $\pi_* \circ q'_2 = p_2|_V$.
Therefore, $q:=(q_1,q'_2)$ is a lift of $p|_V$.
\end{proof}

Our next result makes use of the following concept:

\begin{de}\label{de:fine}
Let $V$ be a vector space.
The \dfn{fine} diffeology on $V$ is the smallest diffeology on $V$
making it into a diffeological vector space~(Definition~\ref{def:dvs}).
\end{de}

For example, the fine diffeology on $\R^n$ is the standard diffeology.

\begin{rem}\label{rem:fineness}
The fine diffeology is generated by the injective linear functions $\R^n \to V$ (\cite[3.8]{I}).
That is, the plots of the fine diffeology are the functions $p : U \to V$ such that 
for each $u \in U$, there is an open neighbourhood $W$ of $u$ in $U$,
an injective linear function $i : \R^n \to V$, for some $n \in \N$, and a smooth function $f : W \to \R^n$
such that $p|_W = i \circ f$.
\end{rem}

\begin{thm}\label{th:fine}
If $(X,x)$ is filtered, then $T_x(X)$ is a fine diffeological vector space.
\end{thm}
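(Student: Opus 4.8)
The plan is to prove the two inclusions of diffeologies on $T_x(X)$ separately. One of them is essentially free: since $X$ is filtered, Theorem~\ref{th:H=dvs} shows that $T^H X = T^{dvs} X$ is a diffeological vector space over $X$, so the fibre $T_x(X)$ with its sub-diffeology is itself a diffeological vector space. As the fine diffeology is by Definition~\ref{de:fine} the \emph{smallest} diffeology making $T_x(X)$ a diffeological vector space, it is contained in the diffeology of $T_x(X)$. The whole content of the theorem is therefore the reverse inclusion: I must show that every plot $p : U \to T_x(X)$ is a plot of the fine diffeology.

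First I would put $p$ into a standard shape. By the description of Hector's diffeology, and since $T_x(X)$ is $1$-representable by Proposition~\ref{pr:1-rep}, near any point of $U$ (which I translate to $0$) the plot $p$ is either constant, hence trivially fine, or of the form $p = T q \circ (\alpha,\beta)$ for a plot $q : V \to X$ and a smooth map $(\alpha,\beta) : U \to TV$ with $q \circ \alpha$ constant at $x$. Setting $G(u,t) = q(\alpha(u)+t\beta(u))$ produces a single plot $G$, pointed at $x$, with $G(u,0)=x$ for all $u$, such that $p(u)$ is the velocity at $t=0$ of the curve $t \mapsto G(u,t)$. The difficulty is that the base point of this representing curve still moves with $u$. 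Removing that motion is the main obstacle, and this is precisely the step where the full strength of \emph{filtered} (rather than merely weakly filtered) is required.

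The key observation is that the two germs $\iota_0 : u \mapsto (u,0)$ and $j : u \mapsto (0,0)$ are \emph{parallel morphisms} in $\cG(X,x)$ from the constant plot $c_x$ to $G$, since both compose with $G$ to give the constant map at $x$. Because $\cG(X,x)$ is filtered, there is a pointed plot $r : (\tilde V,0) \to (X,x)$ and a germ $h$ defining a morphism $G \to r$ in $\cG(X,x)$, so $G = r \circ h$ as germs, with $h \circ \iota_0 = h \circ j$. Since $h \circ j$ is constant at $0$, this forces $h(u,0)=0$ for all $u$ near $0$. Therefore $\tilde g(u) := \frac{\partial h}{\partial t}(u,0)$, the velocity at $t=0$ of $t \mapsto h(u,t)$, is a genuine element of the \emph{fixed} tangent space $T_0(\tilde V)$, depends smoothly on $u$, and satisfies $p(u) = r_*(\tilde g(u))$. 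Thus $p$ factors locally as $p = r_* \circ \tilde g$ with $\tilde g : U \to T_0(\tilde V)$ smooth and $r_* : T_0(\tilde V) \to T_x(X)$ linear; its image lies in the finite-dimensional subspace $r_*(T_0(\tilde V))$, and writing $r_* \circ \tilde g$ in a basis of that subspace exhibits $p$ in the form $i \circ f$ of Remark~\ref{rem:fineness}. Hence every plot is fine, which gives the reverse inclusion and completes the proof.
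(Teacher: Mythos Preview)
Your proof is correct and follows essentially the same strategy as the paper: use the filtered hypothesis to coequalize two parallel morphisms from a constant plot, thereby ``straightening'' the moving base point so that the plot factors through a fixed finite-dimensional tangent space $T_0(\tilde V)$, followed by the linear map $r_*$. The only difference is cosmetic: the paper works directly with the representation $p = Tq \circ f$ and coequalizes $\pi_V \circ f$ and the zero germ (both as morphisms $c_x \to q$), whereas you first package the data into an auxiliary plot $G(u,t) = q(\alpha(u)+t\beta(u))$ and then coequalize $\iota_0$ and $j$ (as morphisms $c_x \to G$); the paper's version avoids introducing the extra parameter $t$ only to differentiate it out again, but the underlying idea is the same.
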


\begin{proof}
By the previous theorem, Hector's diffeology and the dvs diffeology agree,
and the dvs diffeology makes each $T_x(X)$ into a diffeological vector space,
so it is enough to show that Hector's diffeology is contained in the fine diffeology.
In other words, we need to show that any plot $p : U \to T_x(X)$ factors locally
as a smooth function to a finite-dimensional vector space $K$ (with the standard diffeology)
followed by a linear function $K \to T_x(X)$.

Our argument parallels the proof of Theorem~\ref{th:H=dvs}.
Every plot of $T_x(X)$ is locally of the form $Tq \circ f$, where $q : V \to X$ is
a plot and $f : U \to TV$ is smooth:
\[
  \xymatrix{
                   & TV \ar[r]^-{T q} \ar[d]^{\pi_V} & T^H X \ar[d]^{\pi} \\
    U \ar[ur]^{f} & V \ar[r]^{q}            & X .
  }
\]
By translating $U$ and $V$, we can assume that $0 \in U$, $0 \in V$, $\pi_V(f(0)) = 0$
and $q(0) = x$.
The plot $Tq \circ f$ must land in $T_x(X)$, so $q \circ \pi_V \circ f$ must be constant at $x$.
That is, $\pi_V \circ f$ can be viewed as a morphism in $\cG(X,x)$ from the constant
plot to $q$.  The $0$ germ is also a morphism between these pointed plots.
Since $X$ is filtered, by shrinking $U$ and $V$ if necessary, 
there exist a pointed plot $r : (W,0) \to (X,x)$ and a pointed map $h : (V,0) \to (W,0)$
such that $h \circ \pi_V \circ f$ is constant and $r \circ h = q$:
\[
  \xymatrix{
                   & TV \ar[r]^-{T h} \ar[d]^{\pi_V} & TW \ar[r]^-{T r} \ar[d]^{\pi_W} & T^H X \ar[d]^{\pi} \\
 U \ar[ur]^{f} & V \ar[r]^{h}            & W \ar[r]^{r}            & X .
  }
\]
Then $Th \circ f$ is a smooth function landing in the finite-dimensional vector space $T_0 W$,
providing the required local factorization.
\end{proof}

\begin{ex}\label{ex:prodR}
From~\cite[Example 4.24]{CW}, we know that $T_0(\prod_{\omega} \R)$ and $T_0(C^{\infty}(M, \R))$ are not fine,
where $M$ is a manifold of dimension at least 1.
It follows that the diffeological vector spaces $\prod_{\omega} \R$ and $C^{\infty}(M, \R)$ are not filtered.
\end{ex}

Our final result is that fine diffeological vector spaces are filtered, using the following
sufficient condition for a weakly filtered diffeological space to be filtered.
We say that a diffeological space $X$ is \dfn{injectively generated at $x$} if each 
pointed plot $(U,0) \to (X,x)$ factors as a germ at $0$ through an injective pointed plot $(V,0) \to (X,x)$.
We say that $X$ is \dfn{injectively generated} if it is injectively generated at $x$ for each $x \in X$,
i.e., if every plot locally factors through an injective plot.

\begin{lem}\label{lem:injective}
If $(X,x)$ is a weakly filtered pointed diffeological space and is injectively generated at $x$,
then $(X,x)$ is filtered.
\end{lem}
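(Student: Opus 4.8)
The plan is to use that $(X,x)$ is already assumed weakly filtered, so the only thing left to verify in the definition of a filtered category is the coequalizing condition for parallel morphisms of $\cG(X,x)$. Concretely, I would take two parallel morphisms $f, g : p \to q$ in $\cG(X,x)$, where $p : (U,0) \to (X,x)$ and $q : (V,0) \to (X,x)$ are pointed plots and $f, g : (U,0) \to (V,0)$ are germs with $q \circ f = p = q \circ g$ as germs at $0$, and produce a morphism $h : q \to r$ in $\cG(X,x)$ with $h \circ f = h \circ g$.

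The key step is to apply the hypothesis that $X$ is injectively generated at $x$ to the single plot $q$. This yields an injective pointed plot $r : (W,0) \to (X,x)$ together with a germ $h : (V,0) \to (W,0)$ satisfying $q = r \circ h$ as germs at $0$; in particular $h$ is a morphism $q \to r$ in $\cG(X,x)$. The point of passing to an injective plot is that it makes $r$ left-cancellable on germs.

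Finally I would cancel $r$. Composing the two relations gives $r \circ (h \circ f) = q \circ f = p = q \circ g = r \circ (h \circ g)$ as germs at $0$. Restricting all composites to a common open neighbourhood of $0$ on which everything is defined, and evaluating at each point there, injectivity of $r$ forces $h \circ f = h \circ g$ pointwise, hence as germs. Thus $h$ coequalizes $f$ and $g$, and combined with weak filteredness this shows that $\cG(X,x)$ is filtered, i.e.\ that $(X,x)$ is filtered.

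The whole argument is short and essentially a direct unwinding of the definitions. The only point requiring any care, which I expect to be the main (mild) obstacle, is the cancellation step: one must check that equality of the germs $r \circ (h \circ f)$ and $r \circ (h \circ g)$ really forces equality of the germs $h \circ f$ and $h \circ g$. This follows from injectivity of $r$ once all the maps are restricted to a small enough neighbourhood of $0$, but it is worth stating explicitly that germ composition and germ equality are compatible with this pointwise cancellation.
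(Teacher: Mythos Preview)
Your proposal is correct and follows essentially the same approach as the paper's proof: factor the target plot through an injective plot using the injectively-generated hypothesis, then use injectivity to cancel and obtain the required coequalizer. Your write-up is in fact more explicit than the paper's about why injectivity of the plot allows cancellation at the level of germs.
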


\begin{proof}
Given $f,g:r \ra p$ in $\cG(X,x)$, by assumption, 
there is $q \in G$, which is injective, together with a morphism $h:p \ra q$ in $\cG(X,x)$.
The injectivity of $q$ implies that $h \circ f = h \circ g$.
\end{proof}

\begin{cor}\label{co:fine-filtered}
Every fine diffeological vector space is filtered.
\end{cor}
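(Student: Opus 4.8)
The plan is to verify, at every point of a fine diffeological vector space $V$, the two hypotheses of Lemma~\ref{lem:injective}: that $(V,x)$ is weakly filtered and that it is injectively generated at $x$. Once both hold for each $x \in V$, Lemma~\ref{lem:injective} gives that $(V,x)$ is filtered for every $x$, which is exactly the statement that $V$ is filtered.

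Weak filteredness comes for free. A diffeological vector space is in particular a diffeological group under addition (Definition~\ref{def:dvs}), so Proposition~\ref{prop:dgrp} shows that $(V,x)$ is weakly filtered for every $x$.

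For injective generation I would fix $x \in V$ and an arbitrary pointed plot $p : (U,0) \to (V,x)$. By Remark~\ref{rem:fineness}, after shrinking $U$ I may write $p = i \circ f$ for an injective linear map $i : \R^n \to V$ and a smooth map $f : U \to \R^n$. The candidate injective pointed plot is the affine map $j : (\R^n,0) \to (V,x)$ defined by $j(t) = i(t) + x$: it is injective because $i$ is, it sends $0$ to $x$, and it is a plot because $V$ is a diffeological vector space, so $j$ is the sum of the plot $i$ and the constant plot at $x$. Setting $g := f - f(0)$ yields a pointed germ $g : (U,0) \to (\R^n,0)$, and, using $i(f(0)) = p(0) = x$, one checks that $j \circ g = i \circ f = p$ as germs at $0$. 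Thus $p$ factors through the injective pointed plot $j$, so $(V,x)$ is injectively generated at $x$.

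The only point requiring care is the basepoint bookkeeping. The injective generators of the fine diffeology are \emph{linear}, hence based at $0 \in V$, whereas the plots we must factor are based at an arbitrary $x$; translating the generator $i$ by $x$, and correspondingly subtracting $f(0)$ from $f$, repairs this mismatch. The verification that the translate $j$ is still a plot is precisely where the full diffeological-vector-space structure, rather than mere linearity, is used. (Alternatively, one could first reduce to $x = 0$ by noting that translation by $x$ is a diffeomorphism $(V,0) \to (V,x)$ in $\Diff_*$, but handling $x$ directly via the affine shift avoids invoking invariance of the germ category.)
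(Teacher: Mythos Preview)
Your proof is correct and follows exactly the approach the paper intends: apply Lemma~\ref{lem:injective} after noting that $V$ is weakly filtered (Proposition~\ref{prop:dgrp}) and injectively generated (Remark~\ref{rem:fineness}). The paper leaves the corollary unproved, relying on the ``i.e.'' in the definition of injectively generated to absorb the basepoint bookkeeping that you spell out explicitly with the affine shift $j = i + x$; your added detail is welcome but not a departure.
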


\begin{rem}
Here is another application of Proposition~\ref{prop:dgrp}, Theorem~\ref{th:fine} and
Lemma~\ref{lem:injective}. If $G$ is a diffeological group such that $T_e(G)$ is not fine,
then $G$ is not injectively generated at $e$. In particular, neither $\prod_\omega \R$
nor $C^\infty(M,\R)$, with $M$ a manifold of dimension at least $1$, is injectively generated at $0$.
\end{rem}

\vspace*{10pt} %

\end{document}